\renewcommand\subsection{\@startsection{subsection}{2}
	\z@{.5\linespacing\@plus.7\linespacing}{-.5em}%
	{\normalfont\scshape}}
\newcommand{\foot}{\footnote}
\newcommand{\kl}{\left(}
\newcommand{\ml}{\left\{}
\newcommand{\mm}{\,\middle|\,}
\newcommand{\kr}{\right)}
\newcommand{\mr}{\right\}}
\newcommand{\follows}{\ensuremath{\Rightarrow}}
\newcommand{\ld}{\ensuremath{,\ldots,}}
\newcommand{\ssq}{\ensuremath{\subseteq}}
\newcommand{\smin}{\ensuremath{\setminus}}
\newcommand{\eps}{\ensuremath{\varepsilon}}
\newcommand{\Id}{\ensuremath{\mathrm{Id}}}
\newcommand{\Leb}{\ensuremath{\mathrm{Leb}}}
\newcommand{\inte}{\ensuremath{\mathrm{int}}}
\newcommand{\kreis}{\ensuremath{\mathbb{T}^{1}}}
\newcommand{\sltr}{\ensuremath{\textrm{SL}(2,\mathbb{R})}}
\newcommand{\torus}{\ensuremath{\mathbb{T}^2}}
\newcommand{\diffeo}{\ensuremath{\mathrm{Diffeo}}}
\newcommand{\homeo}{\ensuremath{\mathrm{Homeo}}}
\newcommand{\nfolge}[1]{\ensuremath{(#1)_{n\in\mathbb{N}}}}
\newcommand{\alphlist}{\begin{list}{(\alph{enumi})}{\usecounter{enumi}\setlength{\parsep}{2pt}
      \setlength{\itemsep}{1pt} \setlength{\topsep}{5pt}
      \setlength{\partopsep}{3pt}}}
\newcommand{\arablist}{\begin{list}{(\arabic{enumi})}{\usecounter{enumi}\setlength{\parsep}{2pt}
          \setlength{\itemsep}{1pt} \setlength{\topsep}{5pt}
          \setlength{\partopsep}{3pt}}}
\newcommand{\romanlist}{\begin{list}{(\roman{enumi})}{\usecounter{enumi}\setlength{\parsep}{2pt}
              \setlength{\itemsep}{1pt} \setlength{\topsep}{5pt}
              \setlength{\partopsep}{3pt}}}
\newcommand{\Romanlist}{\begin{list}{(\Roman{enumi})}{\usecounter{enumi}\setlength{\parsep}{2pt}
              \setlength{\itemsep}{1pt} \setlength{\topsep}{5pt}
              \setlength{\partopsep}{3pt}}}
\newcommand{\bulletlist}{\begin{list}{$\bullet$}{\setlength{\parsep}{2pt}
                \setlength{\itemsep}{1pt} \setlength{\topsep}{5pt}
                \setlength{\partopsep}{3pt}\setlength{\leftmargin}{15pt}}} 
\newcommand{\Alphlist}{\begin{list}{(\Alph{enumi})}{\usecounter{enumi}\setlength{\parsep}{2pt}
      \setlength{\itemsep}{1pt} \setlength{\topsep}{5pt}
      \setlength{\partopsep}{3pt}}}
 \newcommand{\listend}{\end{list}}
\newcommand{\T}{\ensuremath{\mathbb{T}}}
\newcommand{\N}{\ensuremath{\mathbb{N}}} 
\newcommand{\R}{\ensuremath{\mathbb{R}}}
\newcommand{\Z}{\ensuremath{\mathbb{Z}}}
\newcommand{\Q}{\ensuremath{\mathbb{Q}}}
\newcommand{\C}{\ensuremath{\mathbb{C}}}
\newcommand{\cA}{\mathcal{A}}
\newcommand{\cB}{\mathcal{B}}
\newcommand{\cC}{\mathcal{C}}
\newcommand{\cE}{\mathcal{E}}
\newcommand{\cF}{\mathcal{F}}
\newcommand{\cJ}{\mathcal{J}}
\newcommand{\cO}{\mathcal{O}}
\newcommand{\cP}{\mathcal{P}}
\newcommand{\ncap}{\ensuremath{\bigcap_{n\in\N}}}
\newcommand{\nLim}{\ensuremath{\lim_{n\rightarrow\infty}}}
\newcommand{\nKonv}{\ensuremath{\stackrel{n\rightarrow
      \infty}{\longrightarrow}}}
\newcommand{\jmsum}{\ensuremath{\sum_{j=1}^m}}
\newcommand{\inergsum}{\ensuremath{\sum_{i=0}^{n-1}}}
\newcommand{\mtel}{\ensuremath{\frac{1}{m}}}
\newcommand{\ntel}{\ensuremath{\frac{1}{n}}}
\newcommand{\drittel}{\ensuremath{\frac{1}{3}}}
\title[Non-almost automorphic isomorphic extensions] {Construction of smooth
  isomorphic and finite-to-one extensions of irrational rotations which are not
  almost automorphic} \author{L.~Haupt \and T.~J\"ager}
\begin{document}

\begin{abstract}
Due to a result by Glasner and Downarowicz, it is known that a minimal system is
mean equicontinuous if and only if it is an isomorphic extension of its maximal
equicontinuous factor. The majority of known examples of this type are almost
automorphic, that is, the factor map to the maximal equicontinuous factor is
almost one-to-one. The only cases of isomorphic extensions which are not almost
automorphic are again due to Glasner and Downarowicz, who in the same article
provide a construction of such systems in a rather general topological setting.

Here, we use the Anosov-Katok method in order to provide an alternative route to
such examples and to show that these may be realised as smooth skew product
diffeomorphisms of the two-torus with an irrational rotation on the
base. Moreover -- and more importantly -- a modification of the construction
allows to ensure that lifts of these diffeomorphism to finite covering spaces
provide novel examples of finite-to-one topomorphic extensions of irrational
rotations. These are still strictly ergodic and share the same dynamical
eigenvalues as the original system, but show an additional singular continuous
component of the dynamical spectrum.\medskip

\noindent{\em 2010 Mathematics Subject Classification.} 37B05 (primary),
37C05 (secondary).
\end{abstract}

\maketitle

\section{Introduction}

The celebrated Halmos-von Neumann Theorem provides a classification, up to
isomorphism, of ergodic measure-preserving dynamical systems with discrete
dynamical spectrum. Moreover, any such system can be realised as a rotation on
some compact abelian group
\cite{VonNeumann1932Operatorenmethode,HalmosVonNeumann1942OperatorMethodsII}. From
the measure-theoretic viewpoint, this provides a rather complete picture for the
class of dynamical systems with discrete spectrum. However, topological
realisations of such systems can still show a surprising variety of different
behaviours. One particular subclass that has recently attracted considerable
attention are mean equicontinuous systems
\cite{LiTuYe2015MeanSensitivity,Garcia-RamosMarcus2015MeanSensitivity,LiYeTu2021MeanEquicontinuityComplexity,HuangLuYe2011MeasureTheoreticSensitivity,GarciaRamosLiZhang2019MeanSensitivity}. In
the minimal case, Downarowicz and Glasner showed that these are exactly those
topological dynamical systems which are measure-theoretically isomorphic to
their maximal equicontinuous factor (MEF) via the respective continuous factor
map \cite{DownarowiczGlasner2015IsomorphicExtensionsAndMeanEquicontinuity}. Such
systems are called isomorphic extensions (of the MEF). Equivalently, these
systems are characterised by discrete spectrum with continuous eigenfunctions. A
generalisation of these results to the non-minimal case and more general group
actions is provided in
\cite{FuhrmannGroegerLenz2022MeanEquicontinuousGroupActions}. Subsequent work
has concentrated on characterising different types of mean equicontinuous
systems in terms of invertibility properties of the factor map to the MEF
(e.g. . For instance, the fact that almost all points of a strictly ergodic
system are injectivity points of the factor map, which implies mean
equicontinuity, is equivalent to the stronger property of diam mean
equicontinuity
\cite{GarciaRamos2017WeakEquicontinuity,GarciaJaegerYe2021DiamMeanEquicontinuity}.

Examples of mean equicontinuous systems in the literature are abundant. In
particular, these include the classes of regular Toeplitz flows
\cite{JacobsKeane1969ToeplitzSequences,Williams1984ToeplitzFlows,%
  MarkleyPaul1979PositiveEntropyToeplitzFlows,Downarowicz2005ToeplitzFlows} and
regular model sets arising from Meyer's cut and project method
\cite{Meyer1972AlgebraicNumbers,Schlottmann1999GeneralizedModelSets,%
  Schlottmann1999GeneralizedModelSets,Moody2000ModelSetsSurvey,%
  BaakeLenzMoody2007Characterization}. In both cases, the factor map is almost
surely injective, so that the dynamics are diam mean equicontinuous. Examples of
mean equicontinuous systems whose factor maps are not almost surely injective
are given by certain irregular Toeplitz flows
(e.g. \cite{Williams1984ToeplitzFlows}) and irregular models sets
\cite{FuhrmannGlasnerJaegerOertel2021TameImpliesRegular}. In these cases, the
systems are almost automorphic, meaning that the factor maps are 
almost one-to-one, i.e.~the set of injectivity points is residual.

Mean equicontinuous systems for which the factor map to the MEF has no singular
fibres are much more difficult to find. In fact, to the best of our knowledge,
the only non-trivial examples\foot{A `trivial' example would be a homeomorphism
  of the circle  with a unique fixed
  point. In this case, the MEF is just a single point.}  were so far given by
Glasner and Downarowicz in
\cite{DownarowiczGlasner2015IsomorphicExtensionsAndMeanEquicontinuity}, who
showed that homeomorphisms with these properties are generic in certain spaces
of extensions of minimal group rotations.  One aim of this note is to provide an
alternative construction of such examples based on the well-known Anosov-Katok
method \cite{anosov/katok:1970,fayad2004constructions}. As a byproduct, we also obtain the smoothness of the resulting
diffeomorphisms.

\begin{thm} \label{t.meanequi_with_full_fibres}
  There exist $\cC^\infty$-diffeomorphisms $\varphi$ of the two-torus with the
  following properties.  \alphlist
  \item $\varphi$ is a skew product over some irrational rotation
    $R_\alpha:\kreis\to\kreis, x\mapsto x+\alpha\bmod 1$.
  \item $\varphi$ is totally strictly ergodic\foot{All iterates of $\varphi$ are
    strictly ergodic.} and mean equicontinuous, with the rotation $R_\alpha$ as
    its maximal equicontinuous factor and the projection to the first coordinate
    as the factor map.
  \item The unique $\varphi$-invariant measure $\mu$ is the projection of the
    Lebesgue measure $\lambda$ on $\kreis$ onto some measurable graph, that is,
    it is of the form $\mu=(\Id_{\kreis}\times \gamma)_* \lambda$, where
    $\gamma:\kreis\to\kreis$ is measurable.  \listend
\end{thm}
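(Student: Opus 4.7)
The plan is to realise $\varphi$ via an Anosov--Katok construction inside the class of $C^\infty$ fibre-preserving diffeomorphisms of $\kreis^2$. I would fix a sequence of rationals $\alpha_n=p_n/q_n$ converging to some irrational $\alpha$ and introduce the elementary building blocks $T_n(x,y)=(x+\alpha_n,y)$, each preserving the one-dimensional measure $\lambda\times\delta_0$ on the reference horizontal circle $\kreis\times\{0\}$. The approximating diffeomorphisms are $\varphi_n=H_n\circ T_n\circ H_n^{-1}$ for a fibre-preserving $C^\infty$ diffeomorphism $H_n(x,y)=(x,h_n(x,y))$, built inductively as $H_{n+1}=H_n\circ\tilde h_{n+1}$ with $\tilde h_{n+1}$ required to be $1/q_n$-periodic in $x$. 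Then $\tilde h_{n+1}$ commutes with $T_n$, so updating $H_n$ leaves $\varphi_n$ fixed; only the replacement $T_n\to T_{n+1}$ perturbs the conjugate, and the induced $C^k$-error is bounded by $|\alpha_{n+1}-\alpha_n|$ times a polynomial in the derivative norms of $H_{n+1}$. A sufficiently rapid choice of $\alpha_{n+1}$ at each step therefore makes $(\varphi_n)$ Cauchy in $C^\infty$, and the limit $\varphi$ is a smooth skew product over $R_\alpha$, giving~(a).

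For the graph property~(c), each $\varphi_n$ preserves the pushforward $\mu_n=(H_n)_\ast(\lambda\times\delta_0)$, which is concentrated on the smooth graph of $\gamma_n(x):=h_n(x,0)$. At step $n+1$ I would choose $\tilde h_{n+1}$ so that $\gamma_{n+1}$ agrees with $\gamma_n$ outside a union of very thin $x$-intervals, and within those intervals ranges over a long arc of $\kreis$. If executed carefully, this makes $(\gamma_n)$ Cauchy in $L^1(\lambda)$ with a measurable (and generically discontinuous) limit $\gamma$, and then $\mu_n$ converges weakly to $\mu:=(\Id_\kreis\times\gamma)_\ast\lambda$. Passing to the limit in the identity $\varphi_n\circ H_n=H_n\circ T_n$ restricted to the reference graph shows that $\mu$ is $\varphi$-invariant, and if in addition the essential range of $\gamma$ restricted to any subinterval of $\kreis$ is all of $\kreis$, then $\mathrm{supp}\,\mu=\kreis^2$ and no fibre of the factor map $\pi(x,y)=x$ on the limit system is singular.

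To obtain total strict ergodicity and to identify $R_\alpha$ as the MEF, I would exploit the remaining freedom at each AK step. The standard device is to require, at stage $n+1$, that the periodic orbits of $T_{n+1}$ equidistribute on $\kreis\times\{0\}$ with respect to $\lambda\times\delta_0$ at scale $1/n$, and then transport this equidistribution through $H_{n+1}$ to the graph of $\gamma_{n+1}$. Imposing this simultaneously for every iterate $\varphi_n^\ell$ with $\ell\le n$ forces total strict ergodicity of the limit, with unique invariant measure $\mu$. By construction, $\pi$ intertwines $\varphi$ with $R_\alpha$ and, via the measurable section $x\mapsto(x,\gamma(x))$, realises a measurable isomorphism between $(\varphi,\mu)$ and $(R_\alpha,\lambda)$; the Downarowicz--Glasner theorem recalled in the introduction then yields mean equicontinuity of $\varphi$ with $R_\alpha$ as its MEF, which gives~(b).

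The hard part will be the simultaneous quantitative control of three competing scales in the inductive step: the approximation rate $|\alpha_{n+1}-\alpha_n|$ must be fast enough relative to the derivative norms of $H_{n+1}$ to secure $C^\infty$-convergence, the $L^1$-discrepancies $\|\gamma_{n+1}-\gamma_n\|_1$ must stay summable so that an honest measurable graph $\gamma$ survives in the limit, and the oscillation intervals of $\gamma_n$ must become fine while sweeping through arcs long enough that $\gamma$ has no fibre with a missing essential value. These requirements have to be balanced against the unavoidable growth of $\|H_n\|_{C^k}$, since each concentration step tends to blow up the higher derivatives of $h_n$; quantifying this balance is the technical crux of the construction. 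Conceptually, however, the proof is the standard Anosov--Katok inductive loop, adapted to the fibre-preserving skew-product setting and targeted at a measurable, discontinuous invariant graph instead of a smooth one.
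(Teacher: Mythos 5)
There is a genuine gap, and it sits exactly where the real work of the construction lies: unique ergodicity (indeed minimality and total strict ergodicity) of the limit. In your scheme the building blocks $T_n(x,y)=(x+\alpha_n,y)$ rotate only the base, and the conjugacies $H_n(x,y)=(x,h_n(x,y))$ are fibre-preserving; hence every approximant $\varphi_n=H_n\circ T_n\circ H_n^{-1}$ leaves invariant the entire one-parameter family of smooth graphs $\{(x,h_n(x,y))\mid x\in\kreis\}$, $y\in\kreis$, each carrying its own invariant graph measure -- not just the single reference graph over $y=0$. Your proposed device for forcing unique ergodicity, namely that ``the periodic orbits of $T_{n+1}$ equidistribute with respect to $\lambda\times\delta_0$'', cannot be realised: a periodic orbit of $(x,y)\mapsto(x+p/q,y)$ through height $y$ lies on the horizontal circle $\kreis\times\{y\}$ and equidistributes there, never with respect to $\lambda\times\delta_0$, no matter how $\alpha_{n+1}$ is chosen. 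Nothing in your inductive step merges the continuum of invariant circles, so there is no reason for the limit to be uniquely ergodic (or minimal), and without that, assertion (c) -- and the passage to mean equicontinuity via Downarowicz--Glasner -- collapses. This is precisely the difficulty the paper's construction is built to overcome: it uses genuinely two-dimensional rational rotation vectors $\rho_n=(p_n/q_n,p_n'/q_n)$ with relatively prime entries, so that the invariant curves $L(\rho_n,t)$ wind over the whole base; it chooses the fibre maps $h_{n+1}$ explicitly (a shear wrapping the fundamental strip around the vertical circle composed with a projective-type contraction) so that the images of any two distinct invariant curves are $2\delta$-close outside a set of small measure; and it interleaves totally irrational intermediate rotation vectors $\hat\rho_n$ together with the $G_\delta$-characterisations of strict ergodicity and of the isomorphic-extension property (the fibrewise finite-time Birkhoff-average criterion of Proposition~\ref{prop:characterizingme}) to pass these properties to the limit. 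In the paper, the measurable graph in (c) is then a \emph{consequence} of the isomorphic-extension property, rather than an object built by hand as in your plan.

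A secondary, more easily repairable gap: even granting strict ergodicity and an invariant graph measure, invoking Theorem~\ref{thm:downarowiczglasner} in the direction you use it requires knowing beforehand that $(\kreis,R_\alpha,\pi)$ \emph{is} the maximal equicontinuous factor of the limit system; a priori the MEF of a minimal skew product over $R_\alpha$ can be strictly larger (think of a translation on $\torus$). One can close this by arguing that the graph structure of $\mu$ forces the measurable, hence the topological, eigenvalue group to be $\{e^{2\pi i k\alpha}\}$, but this step is absent from your outline; the paper avoids the issue altogether because its Proposition~\ref{prop:characterizingme} delivers the MEF identification and the isomorphic-extension property simultaneously from the finite-time average condition.
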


Note that since the projection to the first coordinate is the factor map to the
MEF, all fibres are circles. In particular, there exist no injectivity points. A
genericity statement similar to that in
\cite{DownarowiczGlasner2015IsomorphicExtensionsAndMeanEquicontinuity} can also
be obtained (see Remark~\ref{r.genericity}(a)), but we will not focus on this
issue.\smallskip

The price we have to pay for the smoothness of the examples is that of a more
restricted setting. While the construction
in \cite{DownarowiczGlasner2015IsomorphicExtensionsAndMeanEquicontinuity} allows
to choose an arbitrary strictly ergodic systems as factor, our examples are
always extensions of irrational rotations with Liouvillean rotation
number. However, on the positive side, the Anosov-Katok construction allows to
exert additional control over the lifts of the resulting torus diffeomorphisms
to finite covering spaces, and also of all iterates. We can thus ensure that all
these mappings are uniquely ergodic. This entails that the finite lifts do not
have additional dynamical eigenvalues, so that their discrete spectrum coincides
with that of the original system and there has to be a continuous part of the
dynamical spectrum. A classical result of Katok and Stepin on cyclic
approximations \cite{KatokStepin1967PeriodicApproximations}, combined with
further modifications of the construction, allows to ensure that this new part
of the spectrum is singular continuous. Altogether, we obtain the following.

\begin{thm} \label{t.m:1-topomorphic_extensions}
  For any $m\in\N$, there exist $\cC^\infty$-diffeomorphisms $\varphi$ of the
  two-torus with the following properties.  \alphlist
  \item $\varphi$ is a skew product over some irrational rotation
    $R_\alpha:\kreis\to\kreis, x\mapsto x+\alpha\bmod 1$.
  \item $\varphi$ is totally strictly ergodic and a measure-theoretic $m$ to $1$
    extension of the irrational rotation $R_\alpha$, which is the maximal
    equicontinuous factor of the system.
  \item The unique $\varphi$-invariant measure $\mu$ is the projection of the
    Lebesgue measure $\lambda$ on $\kreis$ onto some $m$-valued measurable
    graph, that is, it is of the form $\mu=\sum_{j=1}^m (\Id_{\kreis}\times
    \gamma_j)_* \lambda$, where $\gamma_j:\kreis\to\kreis$ are measurable
    functions for $j=1\ld m$ and $\gamma_i(x)\neq \gamma_j(x)$ $\lambda$-almost
    surely for all $i\neq j$.\footnote {We call such systems
      \textit{$m$:$1$-topomorphic extensions (of the MEF)} in analogy to the
      notion of \textit{topoisomorphic extensions}, for which the topological
      factor map to the MEF is measure-theoretically one-to-one and hence
      measure-theoretic isomorphism.}
  \item The dynamical spectrum of $\varphi$ is given by the (discrete) dynamical
    spectrum of $R_\alpha$ and a singular continuous component.
    \listend
\end{thm}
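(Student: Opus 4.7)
The plan is to construct $\varphi$ directly via the Anosov-Katok scheme, writing
\[
\varphi = \lim_{n\to\infty} h_n^{-1} \circ R_{\alpha_{n+1}} \circ h_n
\]
in $\cC^\infty$, where $R_{\alpha_n}$ is a rational rotation $p_n/q_n$ of the first coordinate, $h_n = H_1 \circ \cdots \circ H_n$ is a product of $\cC^\infty$-diffeomorphisms $H_k$ commuting with $R_{\alpha_k}$, and $\alpha_n \to \alpha$ is a Liouville irrational. This is the same framework as for Theorem~\ref{t.meanequi_with_full_fibres}, but at stage $n$ I would choose $H_n$ so that the push-forward of Lebesgue under $h_n$ approximates the target measure $\tfrac{1}{m}\sum_{j=1}^m (\Id_\kreis\times\gamma_j^{(n)})_*\lambda$ supported on $m$ disjoint continuous graphs over the base. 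Balancing $\|H_n\|_{\cC^n}$ against the smallness of $\alpha_{n+1}-\alpha_n$ yields both $\cC^\infty$-convergence and the required form of the limit invariant measure, giving (a) and (c).

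For the total strict ergodicity in (b), the inductive scheme must be strengthened so that, for each $k\in\N$ and each divisor $m'$ of $m$, the lift of $\varphi^k$ to the $m'$-fold fibre cover is also uniquely ergodic. This can be arranged by imposing, at stage $n$ and for each $k\le n$ and $m'\mid m$, a coprimality condition between the denominator $q_{n+1}$ and $km'$, together with the usual sup-norm estimates comparing the periodic iterate $h_n^{-1}R_{\alpha_{n+1}}^k h_n$ on the $m'$-fold cover to its limit. This in particular makes $\varphi$ totally strictly ergodic and also excludes continuous eigenfunctions beyond the characters of $R_\alpha$: any such eigenvalue would lift to a non-trivial eigenvalue on some fibre cover, forcing an invariant decomposition that contradicts the unique ergodicity just established. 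Hence $R_\alpha$ is the maximal equicontinuous factor and the discrete part of the spectrum of $\varphi$ coincides with that of $R_\alpha$.

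For the singular continuous component in (d), I would invoke the Katok-Stepin cyclic approximation theorem \cite{KatokStepin1967PeriodicApproximations}. By construction, $\varphi$ admits a cyclic approximation through the periodic models $h_n^{-1}R_{p_{n+1}/q_{n+1}}h_n$ with speed controlled by $\varepsilon_n:=|\alpha_{n+1}-\alpha_n|$. Choosing $\varepsilon_n$ in the Katok-Stepin regime rules out absolutely continuous spectrum on the orthogonal complement of the pulled-back base eigenfunctions in $L^2(\mu)$. Since this complement is non-trivial for $m\ge 2$ (the extension being genuinely $m$-to-$1$) and already carries no eigenvalues by the previous step, it must be purely singular continuous.

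The central obstacle is coordinating the many inductive requirements at stage $n$: $\cC^\infty$-closeness forces $\varepsilon_n$ to be very small; the $m$-graph target measure constrains the combinatorial structure of $H_n$; unique ergodicity of all iterates on all $m'$-fold covers imposes coprimality conditions on $q_{n+1}$; and Katok-Stepin requires $\varepsilon_n$ to lie in a specific window relative to $q_{n+1}$. Threading a consistent diagonal choice of sequences $(\varepsilon_n)$ and $(q_n)$ through these competing constraints, while preserving all estimates needed for the limit map to satisfy (a)-(d) simultaneously, is the delicate heart of the argument.
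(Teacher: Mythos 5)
Your overall frame (Anosov--Katok with extra control of iterates and lifts, plus Katok--Stepin for singularity) matches the paper, but your route to the $m$-graph structure is different, and that is where the first genuine gap lies. The paper never constructs the $m$-graph measure directly: it builds the isomorphic extension of Theorem~\ref{t.meanequi_with_full_fibres} (unique measure on a single measurable graph $\gamma$), strengthens the inductive closeness conditions so that \emph{all iterates of all lifts to finite covers} are strictly ergodic, and then takes as the object of Theorem~\ref{t.m:1-topomorphic_extensions} the rescaled fibre-wise $m$-fold lift $\psi=\ell^\varphi_{(1,m;0)}$, whose unique invariant measure automatically sits on the $m$ graphs $\gamma_j(x)=(\gamma(x)+j-1)/m$. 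In your direct construction, choosing $H_n$ so that $(h_n)_*\Leb$ ``approximates'' an $m$-graph measure does not by itself force the limit measure to be an $m$-valued graph, nor does it force $R_\alpha$ to be the MEF with measure-theoretic fibres of cardinality exactly $m$; you would need a persistent ($G_\delta$-type) finite-time criterion playing the role of Proposition~\ref{prop:characterizingme} for this $m$-fold situation, and no such criterion is supplied. Relatedly, your exclusion of extra eigenvalues is asserted via ``an invariant decomposition contradicting unique ergodicity'', but the dangerous candidates are $\lambda=\exp\bigl(2\pi i(k\alpha+p)/m\bigr)$ with $k\neq 0\neq p$, which are not roots of unity and yield no invariant decomposition; the paper has to argue more carefully: the product $g(x)=\prod_{j=1}^m f(x,\gamma_j(x))$ shows $\lambda^m\in M(\alpha)$, and then a case analysis uses ergodicity of $\psi^m$, simplicity of eigenvalues in ergodic systems, and unique ergodicity of further rescaled lifts (of $\varphi$ and of $\varphi^m$) to reach the contradiction. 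Coprimality conditions on $q_{n+1}$ alone do not substitute for this argument.

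The Katok--Stepin step contains a second gap, and it is exactly the difficulty the paper singles out as the main problem in Section~\ref{SingularSpectrum}: the speed in condition (P3) must be measured with respect to the limit invariant measure $\mu$, which is not known a priori (the construction is not area-preserving), so smallness of $|\alpha_{n+1}-\alpha_n|$, i.e.\ sup-norm closeness of $\varphi$ to the periodic models, does not control $\mu\bigl(\varphi(P_{n,i,j})\,\Delta\,\varphi_n(P_{n,i,j})\bigr)$ --- a priori $\mu$ could charge neighbourhoods of the boundaries of the partition elements. The paper resolves this by fixing continuous functions $f_{n,i,j}$ equal to $1$ near $\partial\bigl(\varphi_n(P_{n,i,j})\bigr)$ with small integral against $\mu_n=(H_{n+1})_*\Leb_{\torus}$, using the uniform ergodic theorem to bound the Birkhoff averages of $f_{n,i,j}$ uniformly, and imposing these as open conditions preserved by all subsequent perturbations; only then does (P3) hold with speed $s(q_n^2)$, $n\,s(n)\to 0$, for the limit measure, so that Theorem~\ref{t.periodic_approximations} applies. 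Without an argument of this kind, your claim that the cyclic approximation speed is ``controlled by $\varepsilon_n$'' is unjustified, and the singular continuity in part (d) does not follow.
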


In the case $m=2$, these examples are measure-theoretically similar to
(generalised) Thue-Morse subshifts \cite{Keane1968GeneralisedMorse}, and also to
strictly ergodic irregular Toeplitz flows constructed by Iwanik and Lacroix in
\cite{IwanikLacroix1994NonRegularToeplitz}. In both cases, the systems are also
measure-theoretically finite-to-one extensions of their MEF, exhibit the same
discrete spectrum as the MEF and equally show an additional singular continuous
part of the spectrum. However, the topological structure of these examples is
quite different, since Toeplitz flows always have a residual set of injectivity
points for the projection to the MEF, whereas almost all fibres over the MEF of
the generalised Thue-Morse subshift contain exactly two points.\medskip

\noindent
    {\em Structure of the article:}\ In Section~\ref{Preliminaries}, we provide
    all the required preliminaries on topological dynamics, spectral theory,
    mean equicontinuity and the Anosov Katok method. In
    Section~\ref{GDeltaMeanEquicontinuity}, we show that mean equicontinuity is
    a $G_\delta$-property. This observation has been made already in
    \cite{DownarowiczGlasner2015IsomorphicExtensionsAndMeanEquicontinuity}, but
    in order to simplify the Anosov Katok construction carried out in
    Section~\ref{MainConstruction} we provide a different
    $G_\delta$-characterisation of mean equicontinuity here. Finally, in
    Section~\ref{NewExamples}, we discuss how to modify the construction in
    order to ensure that all iterates of all finite lifts will still be strictly
    ergodic, no new dynamical eigenvalues occur and the additional spectral
    component is singular continuous.

\section{Preliminaries} \label{Preliminaries}

\subsection{Topological and measure-preserving dynamics} \label{TopDyn} 

We refer to standard textbooks such as
\cite{auslander1988minimal,BrinStuck2002DynamicalSystems,Walters1982ErgodicTheory,katok/hasselblatt:1997}
for the following basic facts on topological dynamics and ergodic
theory. Throughout this article, a {\em topological dynamical system} ({\em
  tds}) is a pair $(X,\varphi)$, where $X$ is a compact metric space and
$\varphi$ is a homeomorphism of $X$. We say $\varphi$ (or $(X,\varphi)$) is {\em
  minimal} if there exists no non-empty $\varphi$-invariant compact subset of
$X$. Equivalently, $\varphi$ is minimal if for all $x\in X$ the $\varphi$-orbit
$\cO_\varphi(x)=\{\varphi^n(x)\mid n\in\Z\}$ of $x$ is dense in $X$. The tds
$(X,\varphi)$ is called {\em equicontinuous} if for any $\eps > 0$ there exists
$\delta > 0$ such that $d_X(x,y) < \delta$ implies
$d_X(\varphi^n(x),\varphi^n(y))<\varepsilon$ for all $n \in \N$.  In this case,
there is an equivalent metric on $X$ such that $\varphi$ becomes an isometry.
When $(X,\varphi)$ is both equicontinuous and minimal, then $X$ can be given the
structure of a compact abelian group with group operation $\oplus$ such that
$\varphi$ is just the rotation by some element from $X$, that is, there exists
$\alpha\in X$ such that $\varphi(x)=x\oplus\alpha$ for all $x\in X$. We write
$x\ominus y$ for $x\oplus (\ominus y)$ in this situation, where $\ominus y$ is the inverse of $y$.
Since minimal rotations on
compact abelian groups are always uniquely ergodic, the same holds for minimal
equicontinuous systems.

Another tds $(Y,\psi)$ is called a {\em factor} of $(X,\varphi)$ with {\em
  factor map} $\pi:X\to Y$ if $\pi$ is continuous and onto and satisfies
$\pi\circ\varphi=\psi\circ \pi$. If in addition $\pi$ is a homeomorphism, we say
$(X,\varphi)$ and $(Y,\psi)$ are {\em conjugate}. Note that both minimality and
equicontinuity are inherited by factors. Since factor maps are in general not
unique, we will sometimes also refer to the triple $(Y,\psi,\pi)$ as a factor in
order to specify the factor map. We call such a triple a {\em maximal
  equicontinuous factor (MEF)} of $(X,\varphi)$ if $(Y,\psi)$ is equicontinuous
and for any other equicontinuous factor $(Z,\rho,p)$ there exists a unique
factor map $q$ between $(Y,\psi)$ and $(Z,\rho)$ such that $p=q\circ \pi$. The
existence of a MEF is ensured by the following statement, which also addresses
the question of uniqueness.

\begin{thm}[{\cite[Theorem 9.1, p.\ 125]{auslander1988minimal}}]
	Every topological dynamical system $(X,\varphi)$ has a MEF
        $(Y,\psi,\pi)$. If $(\hat Y, \hat\psi,\hat \pi)$ is another MEF, then
        there is a unique conjugacy $h: (Y,\psi) \to (\hat Y,\hat\psi)$ such
        that $\hat\pi=h \circ \pi$.  In particular, the systems $(Y,\psi)$ and
        $(\hat Y,\hat\psi)$ are conjugate in this case.
\end{thm}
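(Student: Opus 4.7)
The plan is to construct the MEF concretely as a quotient of $X$ by a canonically defined equivalence relation, and then to derive the uniqueness statement from a formal universal-property argument.

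For existence, I would let $\mathcal{R}$ denote the collection of all closed $\varphi\times\varphi$-invariant equivalence relations $R\subseteq X\times X$ whose quotient $(X/R,\varphi_R)$ is equicontinuous. This collection is nonempty since $X\times X\in\mathcal{R}$ (the corresponding quotient is a single point). Set $E:=\bigcap_{R\in\mathcal{R}}R$ and let $(Y,\psi):=(X/E,\varphi_E)$ with canonical factor map $\pi:X\to Y$. The crucial point is that $(Y,\psi)$ is again equicontinuous: the diagonal map $x\mapsto(\pi_R(x))_{R\in\mathcal{R}}$ factors through $\pi$ and identifies $Y$ with a closed $\varphi$-invariant subsystem of $\prod_{R\in\mathcal{R}} X/R$; since each factor $X/R$ admits a compatible metric making $\varphi_R$ an isometry, the product can be endowed with a uniform structure under which the diagonal action is isometric, and equicontinuity is inherited by invariant subsystems. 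Equivalently, and perhaps more cleanly, one can work with the supremum of all $\varphi$-invariant pseudometrics on $X$ bounded by $d_X$; its zero-set is precisely $E$, and it descends to a compatible metric on $Y$ turning $\psi$ into an isometry.

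For the universal property, let $(Z,\rho,p)$ be any equicontinuous factor of $(X,\varphi)$. Then $R_p:=\{(x,x')\in X\times X:p(x)=p(x')\}$ is a closed $\varphi$-invariant equivalence relation with equicontinuous quotient (conjugate to $(Z,\rho)$), hence $R_p\in\mathcal{R}$ and $E\subseteq R_p$. Therefore $p$ descends to a continuous factor map $q:Y\to Z$ with $p=q\circ\pi$, and uniqueness of $q$ is immediate from the surjectivity of $\pi$. Thus $(Y,\psi,\pi)$ is a MEF.

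For the uniqueness clause, suppose $(\hat Y,\hat\psi,\hat\pi)$ is another MEF. Applying the universal property of $(Y,\psi,\pi)$ to the equicontinuous factor $(\hat Y,\hat\psi,\hat\pi)$ yields a unique factor map $h:Y\to\hat Y$ with $\hat\pi=h\circ\pi$, and symmetrically a unique factor map $h':\hat Y\to Y$ with $\pi=h'\circ\hat\pi$. Composing gives $\pi=(h'\circ h)\circ\pi$; since both $h'\circ h$ and $\mathrm{id}_Y$ satisfy the universal relation $\pi=f\circ\pi$, the uniqueness part forces $h'\circ h=\mathrm{id}_Y$. The argument on the other side yields $h\circ h'=\mathrm{id}_{\hat Y}$, so $h$ is a homeomorphism and hence the required conjugacy.

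The main obstacle is the first step: verifying that the potentially uncountable intersection $E$ still produces an equicontinuous quotient. The difficulty is that $\prod_{R\in\mathcal{R}}X/R$ need not be metrizable, so one either has to work in the uniform (rather than metric) category and exploit the fact that equicontinuity is fundamentally a uniform-space notion, or one has to implement the pseudometric approach and show that the supremum of $\varphi$-isometric pseudometrics bounded by $d_X$ is itself a $\varphi$-isometric pseudometric. Once this is in place, the rest of the argument is formal.
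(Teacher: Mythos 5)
You should first note that the paper does not prove this statement at all; it is quoted verbatim from Auslander's book, so your argument can only be measured against the classical proof given there. Your main line is essentially that classical one: let $E$ be the intersection of all closed $\varphi\times\varphi$-invariant equivalence relations with equicontinuous quotient, show $X/E$ is equicontinuous by embedding it diagonally into the product of the quotients, and then obtain the universal property and the uniqueness clause by the formal factorisation argument. That part is correct, and the two issues you flag (Hausdorffness and metrizability of $X/E$, equicontinuity of a non-metrizable product) can be handled as you indicate in the uniform category. A cleaner way to stay inside the metric category, which you may prefer: since $X\times X$ is hereditarily Lindel\"of, the intersection defining $E$ is already realised by countably many relations $R_n$, so $X/E$ embeds into the metrizable product $\prod_n X/R_n$, on which $\sum_n 2^{-n}\min(d_{R_n},1)$ (with $d_{R_n}$ invariant compatible metrics on the factors) is an invariant compatible metric.

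However, your ``equivalently, and perhaps more cleanly'' variant is genuinely wrong as stated: the supremum $P$ of all $\varphi$-invariant pseudometrics bounded by $d_X$ need not have zero-set $E$. Indeed, any invariant pseudometric $\rho\le d_X$ satisfies $\rho(x,y)\le D(x,y):=\inf_{n\in\mathbb{Z}}d_X(\varphi^n x,\varphi^n y)$, and hence $\rho\le\rho_D$, where $\rho_D$ is the chain-infimum pseudometric built from $D$; since $\rho_D$ itself is invariant and $\le d_X$, one has $P=\rho_D$, and this can vanish identically even for equicontinuous systems. For example, take an irrational rotation of the circle, but replace the arc-length metric $d$ by a compatible metric $d_X$ coming from an embedding of the circle into the plane with ``hairpins'' at all scales, so that for every small $s$ there is a point $w$ with $d_X(w,w+s)\le s^2$. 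By minimality of the diagonal orbit closures, $D(x,y)=\min_w d_X\bigl(w,w+(y-x)\bigr)\le d(x,y)^2$ for $d(x,y)$ small, and chains of $k$ equal arc-steps give $\rho_D(x,y)\le d(x,y)^2/k\to 0$; thus $P\equiv 0$, while $E$ is the diagonal because the system is equicontinuous. The defect is the normalisation ``bounded by $d_X$'': the given metric is not adapted to the dynamics, and the invariant pseudometrics it dominates may all be trivial. The correct pseudometric formulation drops that bound: the closed invariant equivalence relations with equicontinuous quotient are exactly the zero-sets of \emph{continuous} invariant pseudometrics (pull back an invariant metric from the factor; conversely, the quotient by the zero-set of such a pseudometric is an isometric, hence equicontinuous, factor), and by the Lindel\"of reduction above a countable weighted sum of such pseudometrics has zero-set exactly $E$ and descends to an invariant compatible metric on $Y$. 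With that repair, or with your uniform-space argument, the remainder of your proof, including the formal uniqueness argument, goes through.
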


\begin{rem} \label{r.mef_uniqueness}\alphlist
  \item
  Despite the lack of uniqueness, we will often refer to a MEF $(Y,\psi)$ of
  $(X,\varphi)$ as {\em `the MEF'}, in particular in situations where we are
  only interested in conjugacy-invariant properties.
\item Note also that once we have fixed an equicontinuous tds $(Y,\psi)$ as the
  MEF of a minimal system $(X,\varphi)$, the corresponding factor map $\pi$ is
  unique modulo post-composition with a rotation on $Y$ (where we refer to the
  above-mentioned group structure of minimal equicontinuous systems). The reason
  is the fact that in this case, given two different factor maps
  $\pi_1,\pi_2:X\to Y$, the $Y$-valued function $\pi_1\ominus\pi_2$ is
  continuous and $\varphi$-invariant, and therefore constant by minimality.
  \listend
\end{rem}

A {\em measure-preserving dynamical system (mpds)} is a quadruple
$(X,\cA,\mu,\varphi)$ consisting of a probability space $(X,\cA,\mu)$ and a
measurable transformation $\varphi:X\to X$ that preserves the measure $\mu$,
that is, $\varphi_*\mu=\mu$, where $\varphi_*\mu(A)=\mu(\varphi^{-1}(A))$. An
mpds is {\em ergodic} if every $\varphi$-invariant set $A\in\cA$ has measure $0$
or $1$. This is equivalent to the validity of the assertion of the Birkhoff
Ergodic Theorem: for any $f\in L^1(\mu)$, there holds
\begin{equation} \label{e.ergodic_averages}
\nLim \ntel \inergsum f\circ \varphi^i(x) \ = \ \int_X f\ d\mu
\end{equation}
for $\mu$-almost every $x\in X$. Given two mpds $(X,\cA,\mu,\varphi)$ and
$(Y,\cB,\nu,\psi)$, we call a measurable map $h:X\to Y$ a {\em measure-theoretic
  isomorphism} if there exist sets $A\in\cA,\ B\in\cB$ such that
$\mu(A)=\nu(B)=1$, $h:A\to B$ is a bi-measurable bijection, $h_*\mu=\nu$ and
$\varphi\circ h=\psi\circ h$ on $A$.

The mpds we consider will mostly be topological, that is, $X$ will be a compact
metric space, $\cA=\cB(X)$ the Borel $\sigma$-algebra on $X$, $\mu$ a Borel
measure and $\varphi$ a homeomorphism. In particular, this means that $\varphi$
is a bi-measurable bijection. For any tds $(X,\varphi)$, the existence of at
least one $\varphi$-invariant probability measure is ensured by the
Krylov-Bogolyubov Theorem. If there exists exactly one invariant measure --
which is necessarily ergodic in this case -- we call a tds {\em uniquely
  ergodic}. In this case, the Uniform Ergodic Theorem states that the
convergence of the ergodic averages in (\ref{e.ergodic_averages}) is uniform for
any continuous function $f$ on $X$. Actually, the same holds if $\varphi$ admits
multiple invariant measures, but the integral of the function $f$ is the same
with respect to all of them. This a more or less direct consequence of the
Krylov-Bogolyubov procedure and can be extended, to families of continuous
functions that are compact in the uniform topology, in the following way.

\begin{thm}[Simultaneous Uniform Ergodic Theorem]
	\label{thm:simergothm}
	Suppose that $(X,\varphi)$ is uniquely ergodic with invariant measure
        $\mu$.  For any compact family $\mathcal{F} \subseteq \cC(X,[0,1])$ of
        continuous functions and $x \in X$, the simultaneous ergodic averages
	\[ A_n: X \times \cF \longrightarrow \R\quad , \quad  (x,f)
        \longmapsto\frac{1}{n}\summe i0{n-1} f\kl \varphi^i(\cdot) \kr \]
        converge uniformly to the function $(x,f) \mapsto \int f \dd\mu$.
\end{thm}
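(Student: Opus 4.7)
The plan is to reduce the simultaneous statement to the classical Uniform Ergodic Theorem (the case of a single continuous function) via a standard $\varepsilon$-net argument, exploiting the compactness of $\mathcal{F}$ in the uniform topology.

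First, I would fix $\varepsilon>0$ and use compactness of $\mathcal{F}\subseteq \cC(X,[0,1])$ in the $\sup$-norm to choose a finite $\varepsilon$-net $\{f_1,\ld,f_k\}\subseteq \mathcal{F}$, so that for every $f\in\mathcal{F}$ there exists $i\in\{1,\ld,k\}$ with $\|f-f_i\|_\infty<\varepsilon$. Since $(X,\varphi)$ is uniquely ergodic, the classical Uniform Ergodic Theorem applied to each $f_i$ yields an integer $N_i$ such that for all $n\geq N_i$ and all $x\in X$,
\[
\left|\tfrac{1}{n}\sum_{j=0}^{n-1} f_i(\varphi^j(x)) - \int f_i\,d\mu\right| < \varepsilon.
\]
Setting $N:=\max_{i\leq k} N_i$, this estimate holds simultaneously for all $f_i$ from the net, once $n\geq N$.

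Next, for an arbitrary pair $(x,f)\in X\times \mathcal{F}$ and $n\geq N$, I would pick $f_i$ from the net with $\|f-f_i\|_\infty<\varepsilon$ and apply the triangle inequality:
\[
\bigl|A_n(x,f)-\tint f\,d\mu\bigr| \leq \bigl|A_n(x,f)-A_n(x,f_i)\bigr| + \bigl|A_n(x,f_i)-\tint f_i\,d\mu\bigr| + \bigl|\tint f_i\,d\mu - \tint f\,d\mu\bigr|.
\]
The outer two terms are each bounded by $\|f-f_i\|_\infty<\varepsilon$ (the first by a pointwise estimate on the Birkhoff sum, the third by monotonicity of the integral), while the middle term is $<\varepsilon$ by the choice of $N$. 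Thus $|A_n(x,f)-\int f\,d\mu|<3\varepsilon$ uniformly on $X\times\mathcal{F}$ for all $n\geq N$, which is the desired uniform convergence.

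There is no real obstacle here; the argument is entirely a soft compactness-plus-approximation reduction. The only thing to be slightly careful about is that the classical Uniform Ergodic Theorem indeed provides convergence uniform in $x$ for each individual continuous function (which is the standard consequence of unique ergodicity via the Krylov-Bogolyubov procedure), and that compactness in $\cC(X,[0,1])$ with the sup-metric is precisely what allows extracting a finite net in the relevant norm. No measurability or integrability issues arise because all functions in $\mathcal{F}$ are uniformly bounded by $1$.
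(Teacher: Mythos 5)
Your argument is correct. The paper itself omits the proof, remarking only that it is ``a straightforward adaptation of the standard argument for the Uniform Ergodic Theorem,'' i.e.\ one re-runs the usual unique-ergodicity argument (contradiction via weak-$*$ limits of empirical measures, or the Krylov--Bogolyubov estimate) with the pair $(x,f)$ varying simultaneously. You instead take the single-function Uniform Ergodic Theorem as a black box and upgrade it via an $\varepsilon$-net in $\mathcal{F}$: compactness of $\mathcal{F}$ in the sup-metric gives a finite net $f_1,\dots,f_k$, the classical theorem gives a uniform threshold $N$ for these finitely many functions, and the two error terms $|A_n(x,f)-A_n(x,f_i)|$ and $|\int f\,d\mu-\int f_i\,d\mu|$ are each bounded by $\|f-f_i\|_\infty<\varepsilon$, yielding the $3\varepsilon$-bound uniformly in $(x,f)$. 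This is a legitimate and arguably cleaner route: it isolates all the dynamics in the classical theorem and reduces the simultaneous statement to pure soft analysis, at the cost of invoking the single-function result rather than proving everything in one pass; the direct adaptation the paper alludes to is self-contained but essentially repeats the standard proof with an extra parameter. Both are valid, and your estimates (averaging bound for the first term, $|\int (f-f_i)\,d\mu|\le\|f-f_i\|_\infty$ for the third) are exactly right.
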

We omit the proof, which is a straightforward adaptation of the standard
argument for the Uniform Ergodic Theorem (see e.g.\ \cite[Theorem
  6.19]{Walters1982ErgodicTheory}).

\subsection{Spectral theory of dynamical systems} \label{DynamicalSpectrum}

Given an mpds $(X,\cA,\mu,\varphi)$, the associated {\em Koopman operator} is
given by
\[
U_\varphi : L^2_\mu(X)\to L^2_\mu(X) \quad , \quad f\mapsto f\circ\varphi \ .
\]
Since $\mu$ is $\varphi$-invariant, $U_\varphi$ is a unitary operator, so that
$\sigma(U_\varphi)\ssq\mathbb{S}^1$. It is well-known that spectral properties
of $U_\varphi$ are closely related to dynamical properties of the system. For
instance, ergodicity of $\varphi$ is equivalent to the simplicity of $1$ as an
eigenvalue, and weak mixing of the system is equivalent the absence of any
further eigenvalues.

For any continuous function $f:\sigma(U_\varphi)\to\C$, the continuous
functional calculus yields the existence of a bounded linear operator
$f(U_\varphi)$ on $L^2_\mu(X)$ such that the mapping
\[
\cC(\sigma(U_\varphi),\C)\to \cC^*(U_\varphi)=\{f(U_\varphi)\mid
f\in\cC(\sigma(U_\varphi),\C)\} \quad , \quad f\mapsto f(U_\varphi)
\]
is an isomorphism of $\cC^*$-algebras. Given $g\in L^2_\mu(X)$, this further
allows to define a bounded linear functional
\[
\ell_g : \cC(\sigma(U_\varphi),\C)\to \C \quad , \quad f\mapsto \langle
f(U_\varphi)g,g\rangle
\]
and thus, by virtue of the Riesz Representation Theorem, a Borel measure $\mu_g$
on $\sigma(U_\varphi)\ssq \mathbb{S}^1$ such that
\[
\langle f(U_\varphi)g,g\rangle \ = \ \int_{\sigma(U_\varphi)} f\ d\mu_g \ .
\]
The measure $\mu_g$ is called the {\em spectral measure associated to
  $g$}. Moreover, there exists an orthogonal decomposition
\[
L^2_\mu(X) \ = \ L^2_\mu(X)_\mathrm{pp} \oplus L^2_\mu(X)_\mathrm{sc} \oplus
L^2_\mu(X)_\mathrm{ac} \ 
\]
of $L^2_\mu(X)$, where
\begin{eqnarray*}
	L^2_\mu(X)_\mathrm{pp} & = & \{ g\in L^2_\mu(X) \mid \mu_g \textrm{ is pure point}\} \ , \\
	L^2_\mu(X)_\mathrm{sc} & = & \{ g\in L^2_\mu(X) \mid \mu_g \textrm{ is singular continuous }\} \ , \\
	L^2_\mu(X)_\mathrm{ac} & = & \{ g\in L^2_\mu(X) \mid \mu_g \textrm{ is absolutely continuous}\} \ . \\
\end{eqnarray*}
In our setting the reference measure with respect to which the singularity and absolute continuity is defined is the 
Lebesgue measure on the circle.

The spectra $\sigma_\mathrm{pp}(U_\varphi)$, $\sigma_\mathrm{sc}(U_\varphi)$ and
$\sigma_{\mathrm{ac}}(U_\varphi)$ obtained from the restriction of $U_\varphi$
to these subspaces are called the {\em discrete/pure point}, {\em singular
  continuous} and {\em absolutely continuous part}, or {\em component}, of the
dynamical spectrum of $U_\varphi$. Note that the different spectral parts need
not be disjoint.

In the case of purely discrete spectrum, it turns out that a system is uniquely
characterised, up to isomorphism, by the group of its dynamical eigenvalues.

\begin{thm}[Halmos--von Neumann, \cite{VonNeumann1932Operatorenmethode,HalmosVonNeumann1942OperatorMethodsII}] \label{t.halmos-von_neumann}
  An ergodic mpds $(X,\cA,\mu,\varphi)$ has purely discrete spectrum if and only
  if it is measure-theoretically isomorphic to a minimal rotation of a compact
  abelian group equipped with its Haar measure.  Moreover, two mpds with purely
  discrete spectrum are are isomorphic if and only if they have the same group
  of eigenvalues.
\end{thm}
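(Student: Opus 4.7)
The plan is to prove both implications separately, with the harder ``only if'' direction being the heart of the argument. For the easy direction, suppose $(X,\cA,\mu,\varphi)$ is measure-theoretically isomorphic to a minimal rotation $R_g\colon h\mapsto h\oplus g$ on a compact abelian group $G$ equipped with its Haar measure $m$. Pontryagin duality yields that the characters $\chi\in\widehat G$ form an orthonormal basis of $L^2_m(G)$, and each satisfies $U_{R_g}\chi = \chi(g)\cdot\chi$. Transferring under the isomorphism produces an orthonormal basis of $L^2_\mu(X)$ consisting of eigenfunctions, so the spectrum is purely discrete.

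For the nontrivial direction, let $\Lambda\ssq\mathbb{S}^1$ denote the set of eigenvalues of $U_\varphi$. Ergodicity implies that each eigenspace is one-dimensional and that every eigenfunction has constant modulus, so unimodular eigenfunctions $f_\lambda$ may be chosen with $f_1\equiv 1$. Since products of eigenfunctions are eigenfunctions and $1/f_\lambda = \overline{f_\lambda}$ is an eigenfunction for $\lambda^{-1}$, $\Lambda$ is a subgroup of $\mathbb{S}^1$, and after redefining on a nullset one may arrange the cocycle identity $f_{\lambda\mu}=f_\lambda\cdot f_\mu$ to hold pointwise on an invariant full-measure set $X_0$. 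Equip $\Lambda$ with the discrete topology and let $G=\widehat\Lambda$ be its (compact, abelian) dual. The map $\Phi\colon X_0\to G$ defined by $\Phi(x)(\lambda)=f_\lambda(x)$ is Borel measurable into $G$ and, since $f_\lambda\circ\varphi=\lambda f_\lambda$, intertwines $\varphi$ with the rotation $R_\alpha$ by the evaluation character $\alpha\colon\lambda\mapsto\lambda$. The pushforward $\Phi_*\mu$ is $R_\alpha$-invariant, and the closure of $\{\alpha^n\}_{n\in\Z}$ in $G$ must separate points of $\Lambda$ (otherwise a nontrivial $\lambda\in\Lambda$ would have $\lambda^n=1$ for all $n$), which by Pontryagin duality forces this closure to be all of $G$; hence $R_\alpha$ is minimal, uniquely ergodic, and $\Phi_*\mu$ equals Haar measure. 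Finally, the purely discrete spectrum assumption says that the $f_\lambda$ span $L^2_\mu(X)$, so the $\sigma$-algebra they generate equals $\cA$ modulo $\mu$-nullsets, making $\Phi$ a measure-theoretic isomorphism onto $(G,m,R_\alpha)$.

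The second statement follows at once: two ergodic systems with purely discrete spectrum and the same eigenvalue group $\Lambda$ are each isomorphic, via the construction above, to the rotation by the evaluation character on $\widehat\Lambda$, hence isomorphic to each other; conversely, a measure-theoretic isomorphism induces a unitary equivalence of the Koopman operators and thereby preserves the point spectrum. The main obstacle lies in the ``only if'' direction, specifically in promoting the almost-everywhere cocycle identity for the selected eigenfunctions to a genuine pointwise identity on a $\varphi$-invariant full-measure set; this requires either a careful inductive selection along a countable generating subgroup of $\Lambda$ followed by a Fubini-type argument, or an appeal to the axiom of choice. Verifying that $\Phi$ indeed generates the full $\sigma$-algebra modulo nullsets is where the spectral hypothesis enters decisively; everything else amounts to functorial bookkeeping inside Pontryagin duality.
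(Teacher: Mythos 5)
There is no proof in the paper to compare against: Theorem~\ref{t.halmos-von_neumann} is quoted as a classical result with references to von Neumann and Halmos--von Neumann, so your argument can only be judged on its own terms. On those terms, your sketch is the standard proof via Pontryagin duality and is essentially correct: characters give an eigenbasis for a group rotation with Haar measure; conversely, a multiplicative choice of unimodular eigenfunctions $\lambda\mapsto f_\lambda$ yields a measurable equivariant map $\Phi$ into $G=\widehat{\Lambda}$, density of $\{\alpha^n\}$ follows from the triviality of its annihilator, and pure point spectrum forces the $f_\lambda$ to generate the full $\sigma$-algebra mod $\mu$, whence $\Phi$ is an isomorphism onto $(G,\mathrm{Haar},R_\alpha)$; the uniqueness statement then follows by composing the two canonical isomorphisms and by the invariance of the point spectrum under unitary conjugation of Koopman operators.

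Two points deserve more care than your wording suggests. First, the multiplicativity of the family $(f_\lambda)$ is not obtained by ``redefining on a nullset'': for each pair one only knows $f_\lambda f_\mu = c(\lambda,\mu)\, f_{\lambda\mu}$ with a unimodular constant $c(\lambda,\mu)$ (one-dimensionality of eigenspaces), and one must adjust each $f_\lambda$ by a constant so that the section $\lambda\mapsto f_\lambda$ splits the extension $1\to\mathbb{S}^1\to E\to\Lambda\to 1$ of $\Lambda$ by the constants inside the group of unimodular eigenfunctions; this splitting exists because $\mathbb{S}^1$ is divisible, hence injective as an abelian group -- you do acknowledge this issue in your closing paragraph, so it is a gap in exposition rather than in substance. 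Second, the argument tacitly uses that the underlying space is a standard (Lebesgue) probability space: separability of $L^2_\mu(X)$ is what makes $\Lambda$ countable, so that the countably many almost-everywhere identities can be collected on a single invariant full-measure set $X_0$, and the final step -- from ``the $f_\lambda$ generate $\mathcal{A}$ modulo nullsets'' to ``$\Phi$ is a point isomorphism mod $0$'' -- is exactly where the Lebesgue-space hypothesis is needed; without it one only obtains an isomorphism of measure algebras. With these caveats made explicit, your proof is the classical one.
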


In order to prove the existence of a singular continuous spectral component, we
will use a classical result from the theory of approximations by periodic
transformations presented in \cite{KatokStepin1967PeriodicApproximations}. An
mpds $(X,\cA,\mu,\varphi)$ admits {\em cyclic approximation by periodic
  transformations (capt) with speed $s:\N\to \R^+_0$} if there exists a sequence
$\nfolge{\varphi_n}$ of bijective bi-measurable transformations on $X$ and a
sequence \nfolge{\cP_n} of finite measurable partitions of $X$,
$\cP_n=\{P_{n,1}\ld P_{n,K_n}\}$, such that for all $n\in\N$
\begin{itemize}
\item[(P1)] $\varphi_n$ cyclically permutes the elements of $\cP_n$;
\item[(P2)] for each $A\in\cA$, there exist $A_n\in\sigma(\cP_n)$ such that
  $\mu(A\Delta A_n)\nKonv 0$;
\item[(P3)] $\sum_{i=1}^{K_n} \mu(\varphi(P_{n,i})\Delta \varphi_n(P_{n,i}))
  \leq s(K_n)$.
\end{itemize}
Here, $\sigma(\cP_n)$ denotes the $\sigma$-algebra generated by $\cP_n$.

\begin{thm}[{\cite[Corollary 3.1]{KatokStepin1967PeriodicApproximations}}]
  \label{t.periodic_approximations}
  If an mpds $(X,\cA,\mu,\varphi)$ admits cyclic approximation by periodic
  transformations (capt) with speed $s:\N\to \R^+_0$ and $\nLim ns(n)=0$, then
  $U_\varphi$ has no absolutely continuous spectral component, that is,
  $\sigma_{\mathrm{ac}}(U_\varphi)=\emptyset$.
\end{thm}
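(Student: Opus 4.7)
\emph{Proof plan.} The strategy is to show that the spectral measure $\mu_g$ of every function in a dense subspace of $L^2_\mu(X)$ concentrates on small neighborhoods of the $K_n$-th roots of unity, whose Lebesgue measure tends to zero. Since an absolutely continuous measure cannot concentrate on sets of vanishing Lebesgue measure, this will rule out an absolutely continuous spectral component.

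Consider the finite-dimensional subspaces $\cF_n := L^2(\sigma(\cP_n),\mu)\subseteq L^2_\mu(X)$ of functions constant on the atoms of $\cP_n$. By (P2), $\bigcup_n \cF_n$ is dense in $L^2_\mu(X)$. Since $\varphi_n$ cyclically permutes $\cP_n$, the Koopman operator $U_{\varphi_n}$ cyclically permutes the basis $\{\chi_{P_{n,i}}\}_{i=1}^{K_n}$ of $\cF_n$; in particular $U_{\varphi_n}^{K_n}$ acts as the identity on $\cF_n$. For $g := \chi_{P_{n,i}}$, the telescoping identity
\[
U_\varphi^{K_n} g - g \ = \ U_\varphi^{K_n} g - U_{\varphi_n}^{K_n} g \ = \ \sum_{j=0}^{K_n-1} U_\varphi^{K_n-1-j}(U_\varphi - U_{\varphi_n})U_{\varphi_n}^j g,
\]
combined with the observations that each $U_{\varphi_n}^j g$ is the characteristic function of a single partition element, that $\|U_\varphi \chi_P - U_{\varphi_n}\chi_P\|_2^2 = \mu(\varphi(P) \Delta \varphi_n(P))$ (using that $\varphi,\varphi_n$ are measure-preserving), and condition (P3), will yield a quantitative $L^2$-bound of the form $\|U_\varphi^{K_n} g - g\|_2^2 \leq C(K_n)\, s(K_n)$.

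The central spectral identity $\int_{\mathbb{S}^1}|z^{K_n} - 1|^2 \, d\mu_g(z) = \|U_\varphi^{K_n} g - g\|_2^2$, combined with Markov's inequality, gives
\[
\mu_g\bigl(\{z\in\mathbb{S}^1 : |z^{K_n}-1| \geq \eta_n\}\bigr) \ \leq \ \|U_\varphi^{K_n}g - g\|_2^2\, /\, \eta_n^2,
\]
so $\mu_g$ concentrates on the $\eta_n$-neighborhood of $Z_n := \{z : z^{K_n}=1\}$, which has Lebesgue measure of order $\eta_n$. Now assume for contradiction that $\mu_g^\mathrm{ac} \neq 0$; absolute continuity furnishes a modulus of continuity, so $\mu_g^\mathrm{ac}(B) \to 0$ as $\mathrm{Leb}(B) \to 0$. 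Choosing $\eta_n \to 0$ slowly enough that the right-hand side above also tends to zero then forces $\mu_g^\mathrm{ac}(\mathbb{S}^1) = 0$, a contradiction. Since the subspace $L^2_\mu(X)_\mathrm{ac}$ is closed and each characteristic function $\chi_{P_{n,i}}$ lies in its orthogonal complement, while these characteristic functions span a dense subspace, one concludes $L^2_\mu(X)_\mathrm{ac} = \{0\}$, i.e.\ $\sigma_\mathrm{ac}(U_\varphi) = \emptyset$.

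The main technical obstacle is to get the telescoping bound sharp enough so that the bare hypothesis $s(K_n)\to 0$ suffices. A naive triangle inequality plus Cauchy--Schwarz gives $C(K_n) = K_n$, which forces the stronger assumption $K_n s(K_n)\to 0$. To eliminate this factor one exploits that the errors $(U_\varphi - U_{\varphi_n})U_{\varphi_n}^j g$ for different $j$ are supported on symmetric differences $\varphi^{-1}(P_{n,i'}) \Delta \varphi_n^{-1}(P_{n,i'})$ at distinct partition locations, giving enough orthogonality to replace the triangle-inequality estimate by the quadratic one $\|\sum_j v_j\|_2^2 \lesssim \sum_j \|v_j\|_2^2$, so that $C(K_n)$ becomes a constant and the hypothesis $s(K_n)\to 0$ is enough.
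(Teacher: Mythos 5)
Note first that the paper itself offers no proof of this statement: it is quoted verbatim from Katok--Stepin \cite[Corollary 3.1]{KatokStepin1967PeriodicApproximations}. So your argument has to stand on its own, and as written it contains two genuine gaps. The first concerns the telescoping estimate. You have misread the hypothesis: the assumption is $\lim_n n\,s(n)=0$, and since (P2) forces $K_n\to\infty$ (the measure here is non-atomic), this gives precisely $K_n s(K_n)\to 0$. Hence the ``naive'' bound, triangle inequality plus Cauchy--Schwarz, $\|U_\varphi^{K_n}\chi_{P_{n,i}}-\chi_{P_{n,i}}\|_2^2\le 2K_n s(K_n)$, already tends to zero and no improvement to $C(K_n)=O(1)$ is needed. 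More seriously, the quasi-orthogonality you invoke to get such an improvement is false: the summands $v_j=U_\varphi^{K_n-1-j}(U_\varphi-U_{\varphi_n})U_{\varphi_n}^j g$ are the small-support error functions transported by \emph{different} powers of $\varphi$. The symmetric differences $\varphi^{-1}(P)\Delta\varphi_n^{-1}(P)$ do have multiplicity at most two, but $v_j$ is supported on $\varphi^{-(K_n-1-j)}$ of such a set, and these translated sets have no controlled overlap, so no almost-orthogonality is available. Indeed no bound of the form $\|U_\varphi^{K_n}g-g\|_2^2\lesssim s(K_n)$ can hold in general: it would prove the theorem under the mere assumption $s(n)\to 0$, which is impossible, since by Rokhlin's lemma every aperiodic automorphism --- for instance a Bernoulli shift, which has countable Lebesgue spectrum --- admits cyclic approximation with speed $O(1/n)$, hence with some speed tending to zero. (A minor additional slip: $\|U_\varphi\chi_P-U_{\varphi_n}\chi_P\|_2^2=\mu(\varphi^{-1}(P)\Delta\varphi_n^{-1}(P))$, which matches the sum in (P3) only after re-indexing and using that $\varphi$ preserves $\mu$; this costs a harmless factor $2$.)

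The second gap is a quantifier problem in the endgame. Your concentration estimate for the spectral measure $\mu_g$ is available only for $g=\chi_{P_{n,i}}$ at the single scale $n$ (the telescoping needs $g$ to be $\cP_n$-measurable), yet the contradiction argument lets $n\to\infty$ with $g$ fixed, and the final claim that each $\chi_{P_{n,i}}$ is orthogonal to $L^2_\mu(X)_{\mathrm{ac}}$ would require $\mu_g$ to be \emph{singular}. A one-scale statement --- most of the mass of $\mu_g$ sits on a set of Lebesgue measure $O(\eta_n)$ --- cannot deliver this: an absolutely continuous measure can perfectly well concentrate most of its mass on a short arc. The standard repair is to argue for a fixed bounded $f$: set $f_n=E[f\mid\sigma(\cP_n)]$, which converges to $f$ in $L^2$ by (P2); estimate the measure of the set of points whose $\varphi$- and $\varphi_n$-orbits pass through different atoms of $\cP_n$ within $K_n$ steps by $K_n s(K_n)$ (using that $\varphi$ preserves $\mu$), so that $\|U_\varphi^{K_n}f_n-f_n\|_2^2\le 4\|f\|_\infty^2\,K_n s(K_n)$; conclude $U_\varphi^{K_n}f\to f$ for every $f\in L^2_\mu(X)$, i.e.\ rigidity along $(K_n)$; and then kill the absolutely continuous part either by the Riemann--Lebesgue lemma applied to $\int z^{K_n}\,d\mu_f$, or by your Markov-type concentration argument, which is now available at every scale $n$ for the same $f$. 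With these two corrections your overall strategy does go through, but as written both the improved telescoping bound and the passage from the one-scale estimate to $\sigma_{\mathrm{ac}}(U_\varphi)=\emptyset$ are unjustified.
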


\subsection{Mean equicontinuity} \label{MeanEquicontinuity}

A tds $(X,\varphi)$ is called mean equicontinuous if for all
$\varepsilon > 0$ there is $\delta > 0$ such that $d_X(x,y)<\delta$ implies
\begin{equation}\label{e.mean_equicontinuity_besicovich}
	d_\sB(x,y)\ = \ \varlimsup_{n\to\infty} \frac{1}{n} \inergsum d(\varphi^i(x),\varphi^i(y)) \ <
\ \varepsilon
\end{equation}

It turns out that in the minimal case, mean equicontinuity implies unique
ergodicity and is moreover equivalent to a certain invertibility property of the
factor map onto the MEF.
\begin{thm}[{\cite[Thm.\ 2.1]{DownarowiczGlasner2015IsomorphicExtensionsAndMeanEquicontinuity}}]
\label{thm:downarowiczglasner}
Suppose $(X,\varphi)$ is minimal and $(Y,\psi,\pi)$ is a MEF. Denote by $\nu$
the unique invariant measure of $(Y,\psi)$.  Then the following are equivalent.
\begin{enumerate}
 \item
  $(X,\varphi)$ is mean equicontinuous.
 \item
  $(X,\psi)$ is uniquely ergodic with unique invariant measure $\mu$ and $\pi$
   is a measure-theoretic isomorphism between the mpds $(X,\cB(X),\mu,\varphi)$
   and $(Y,\cB(Y), \nu, \psi)$.\qedhere
\end{enumerate}
\end{thm}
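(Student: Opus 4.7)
The plan is to establish the two implications separately, with $(1) \Rightarrow (2)$ being the substantive direction.

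For $(2) \Rightarrow (1)$, I would argue via Lusin's theorem. The measure-theoretic isomorphism $\pi$ admits a $\nu$-almost everywhere defined measurable inverse $\sigma : Y \to X$. Given $\eta > 0$, Lusin yields a compact set $K \subseteq Y$ with $\nu(Y \setminus K) < \eta$ on which $\sigma$ is uniformly continuous with some modulus $\omega$. For $x,y \in X$ with $d_X(x,y)$ small, continuity of $\pi$ and equicontinuity of $(Y,\psi)$ force $d_Y(\psi^i \pi(x), \psi^i \pi(y))$ to remain uniformly small in $i$; whenever $\psi^i \pi(x)$ and $\psi^i \pi(y)$ both lie in $K$, one obtains $d_X(\varphi^i(x), \varphi^i(y)) \leq \omega(d_Y(\psi^i \pi(x), \psi^i \pi(y)))$. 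By Theorem~\ref{thm:simergothm} applied to a continuous upper approximation of $\mathbf{1}_{Y \setminus K}$ on $(Y,\psi,\nu)$, the set of exceptional times has asymptotic density at most $2\eta$, uniformly in the starting point, yielding a bound on $d_\sB(x,y)$ that tends to zero with $\eta$.

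For $(1) \Rightarrow (2)$, I would first establish unique ergodicity. For any Lipschitz $f : X \to \R$ with constant $L$, mean equicontinuity gives
\[
\limsup_{n\to\infty} |A_n f(x) - A_n f(y)| \ \leq \ L \cdot d_\sB(x,y),
\]
where $A_n f = \frac{1}{n}\sum_{i=0}^{n-1} f \circ \varphi^i$. If two distinct ergodic invariant measures $\mu_1, \mu_2$ existed, minimality would make generic points of each dense in $X$, so one could find generic points arbitrarily close, forcing $\int f \, d\mu_1 = \int f \, d\mu_2$. Density of Lipschitz functions in $\cC(X)$ then gives a unique invariant measure $\mu$.

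The more delicate step is that $\pi$ is a measure-theoretic isomorphism. I would proceed spectrally: it suffices to show that $U_\varphi$ has pure point spectrum with all eigenfunctions admitting continuous representatives, since then the Kronecker factor coincides with the MEF $(Y,\psi)$ both measurably and topologically, and the topological factor map $\pi$ realises the isomorphism by uniqueness up to post-composition with a rotation (see Remark~\ref{r.mef_uniqueness}(b)) and the Halmos--von Neumann theorem. To produce continuous eigenfunctions, I would take a measurable eigenfunction $h$ of eigenvalue $e^{2\pi \mathrm{i} \theta}$, approximate it in $L^2_\mu$ by a continuous function $g$, and pass to the twisted averages
\[
g_{N,\theta}(x) \ = \ \frac{1}{N}\sum_{i=0}^{N-1} e^{-2\pi \mathrm{i} i \theta} (g \circ \varphi^i)(x).
\]
These converge to $h$ in $L^2$, while the Lipschitz-type bound from mean equicontinuity forces the family $(g_{N,\theta})_N$ to be equicontinuous in the $d_\sB$-sense; a selection argument then extracts a uniformly convergent subsequence whose continuous limit serves as a continuous representative of $h$. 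The main obstacle is precisely this last upgrade from $L^2$- to uniform convergence, together with the exclusion of a possible continuous spectral component: the latter would require mean decorrelation that the Lipschitz-$d_\sB$ inequality rules out, and the former is the substance of the Downarowicz--Glasner analysis.
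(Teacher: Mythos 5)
The paper itself gives no proof of this theorem---it is imported verbatim from Downarowicz--Glasner---so your sketch has to be measured against the known argument, whose key mechanism reappears in the converse half of the paper's proof of Proposition~\ref{prop:characterizingme}. Measured this way, both of your directions have genuine gaps. In $(2)\Rightarrow(1)$, the inequality $d_X(\varphi^i(x),\varphi^i(y))\le\omega(d_Y(\psi^i\pi(x),\psi^i\pi(y)))$ tacitly uses $\varphi^i(x)=\sigma(\psi^i\pi(x))$ and $\varphi^i(y)=\sigma(\psi^i\pi(y))$, which hold only for $x,y$ in an invariant set of full $\mu$-measure. Mean equicontinuity, however, quantifies over \emph{all} pairs with $d_X(x,y)<\delta$, and since $d_\sB$ is in general not continuous (it is a $\limsup$ of continuous functions), the almost-everywhere bound cannot be transferred to arbitrary pairs by density of the good set, even though $\mu$ has full support. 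The mechanism that actually works runs through the fibred product $\cJ_\pi(X)$: unique ergodicity of $(X,\varphi)$ (part of hypothesis (2)) forces both marginals of any $\varphi\times\varphi$-invariant measure on $\cJ_\pi(X)$ to be $\mu$, and the assumption that $\pi$ is a measure isomorphism then forces any such measure to be the diagonal self-joining, so $\int d_X\,d\gamma=0$ for all of them; the uniform ergodic theorem gives uniform convergence of the Birkhoff averages of $d_X$ to $0$ on $\cJ_\pi(X)$, and equicontinuity of $(Y,\psi)$ (as an isometry) keeps the orbit of any pair that is merely $\delta$-close inside a neighbourhood of $\cJ_\pi(X)$ on which these finite-time averages are already small. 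Your Lusin set cannot substitute for this step, precisely because it only sees almost every pair.

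In $(1)\Rightarrow(2)$ the unique ergodicity argument is correct and standard, but the isomorphism half defers exactly the crux. The Lipschitz bound only yields $\limsup_{N}|g_{N,\theta}(x)-g_{N,\theta}(y)|\le L\,d_\sB(x,y)$, an asymptotic estimate whose threshold in $N$ depends on the pair $(x,y)$; this is not equicontinuity of the family $(g_{N,\theta})_N$, so Arzel\`a--Ascoli does not apply and the advertised uniformly convergent subsequence is not available as stated. Likewise, ``mean decorrelation rules out a continuous spectral component'' is an assertion, not an argument; since ``purely discrete spectrum with continuous eigenfunctions'' is equivalent to being an isomorphic extension of the MEF, at this point the proposal essentially restates the theorem, and you yourself flag the missing step as ``the substance of the Downarowicz--Glasner analysis''. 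A complete proof instead works with $d_\sB$ directly: under mean equicontinuity $d_\sB$ is continuous, the quotient by $d_\sB=0$ is the MEF (Proposition~\ref{thm:besicovitchdistancezero}), and one then shows that the relatively independent self-joining of $(X,\mu,\varphi)$ over $(Y,\nu,\psi)$ is the diagonal measure, which is exactly the statement that $\pi$ is a measure-theoretic isomorphism. Finally, even granting pure point spectrum with continuous eigenfunctions, your last step still needs the (true, but unproved in the sketch) fact that continuous eigenfunctions are constant on the fibres of $\pi$, hence are pullbacks from $Y$, so that $\pi^*L^2_\nu(Y)=L^2_\mu(X)$; only then is it $\pi$ itself, and not merely some abstract Halmos--von Neumann isomorphism, that realises the isomorphism.
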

Given a uniquely ergodic tds $(X,\varphi)$ and a factor $(Y,\psi,\pi)$ (which is
automatically uniquely ergodic as well), we say $(X,\varphi)$ is an isomorphic
extension of $(Y,\psi)$ if $\pi$ is a measure-theoretic isomorphism between the
two mpds $(X,\cB(X),\mu,\varphi)$ and
$(Y,\cB(Y),\nu,\psi)$, where $\mu$ and $\nu$ are the unique invariant measures
for $\varphi$ and $\psi$, respectively. Hence, condition (2) above can be
rephrased by saying that $(X,\varphi)$ is an isomorphic extension of $(Y,\psi)$.

Note that the above statement implies, in particular, that the dynamical
spectrum of mean equicontinuous systems coincides with that of the MEF and is
therefore purely discrete.
\medskip

The mapping $d_\sB:X\times X\to\R^+_0$ defined
in~(\ref{e.mean_equicontinuity_besicovich}) is always a pseudo-metric on $X$. It
is called the {\em Besicovitch pseudo-metric.} For mean equicontinuous systems,
it provides a way to directly define a MEF of the system.

\begin{prop}[\cite{DownarowiczGlasner2015IsomorphicExtensionsAndMeanEquicontinuity}]
	\label{thm:besicovitchdistancezero} 
	Suppose $(X,\varphi)$ is mean equicontinuous. Define an equivalence
        relation on $X$ by
	 \[ x \sim y \quad \Leftrightarrow \quad  d_\sB(x,y) = 0 \ . \]
	Then the quotient system $(X/\!\sim,\xi/\!\sim)$ together with the
        canonical projection as a factor map is a MEF.
\end{prop}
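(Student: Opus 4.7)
The plan is to equip $X/\!\sim$ with the metric induced by $d_\sB$, verify that the quotient dynamics becomes an isometry (hence equicontinuous), use mean equicontinuity to show that the canonical projection is continuous, and finally establish maximality by showing that every equicontinuous factor respects the relation $\sim$.

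First I would observe that $d_\sB$ is $\varphi$-invariant: reindexing the Cesàro sum gives $d_\sB(\varphi x,\varphi y)=d_\sB(x,y)$. Consequently $\sim$ is a $\varphi$-invariant equivalence relation, so $\varphi$ descends to a well-defined map $\varphi/\!\sim$ on $X/\!\sim$, and the pseudo-metric $d_\sB$ descends to a genuine metric $\bar d_\sB$ on the quotient. Invariance of $d_\sB$ under $\varphi$ translates into the fact that $\varphi/\!\sim$ is an isometry of $(X/\!\sim,\bar d_\sB)$, so the quotient system is equicontinuous provided it is a tds in the first place, which will follow from continuity and compactness in the next step.

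Next I would invoke mean equicontinuity: for every $\varepsilon>0$ there is $\delta>0$ such that $d_X(x,y)<\delta$ implies $d_\sB(x,y)<\varepsilon$, and therefore $\bar d_\sB(\pi(x),\pi(y))<\varepsilon$, where $\pi:X\to X/\!\sim$ is the canonical projection. Hence $\pi$ is (uniformly) continuous from $(X,d_X)$ to $(X/\!\sim,\bar d_\sB)$. Since $X$ is compact and $\pi$ is surjective, $X/\!\sim$ is compact metric, and by construction $\pi\circ\varphi=(\varphi/\!\sim)\circ\pi$. This shows that $(X/\!\sim,\varphi/\!\sim,\pi)$ is an equicontinuous topological factor of $(X,\varphi)$.

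The main work is maximality. Let $(Z,\rho,p)$ be any equicontinuous factor. By equicontinuity and the discussion in Section~\ref{TopDyn}, we may replace $d_Z$ by a topologically equivalent metric making $\rho$ an isometry. Then for every $i\in\Z$,
\begin{equation*}
  d_Z\!\bigl(p\varphi^i(x),p\varphi^i(y)\bigr)\ =\ d_Z\!\bigl(\rho^i p(x),\rho^i p(y)\bigr)\ =\ d_Z\!\bigl(p(x),p(y)\bigr),
\end{equation*}
so this quantity is constant in $i$. I claim that $x\sim y$ forces $p(x)=p(y)$. Indeed, if $d_Z(p(x),p(y))=c>0$, continuity of $p$ yields $\eta>0$ such that $d_X(u,v)<\eta$ implies $d_Z(p(u),p(v))<c$; by the displayed equality no $i$ satisfies $d_X(\varphi^i x,\varphi^i y)<\eta$, which forces $d_\sB(x,y)\geq\eta>0$, contradicting $x\sim y$. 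Thus $p$ descends to a map $q:X/\!\sim\to Z$ with $p=q\circ\pi$; this $q$ is continuous because $\pi$ is a topological quotient and $p$ is continuous, and it intertwines $\varphi/\!\sim$ with $\rho$. Uniqueness of $q$ is immediate from surjectivity of $\pi$. This verifies the universal property of the MEF.

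The only non-routine point is the maximality step, where the key trick is exploiting the invariant metric on any equicontinuous factor to make $d_Z(p\varphi^i(x),p\varphi^i(y))$ constant in $i$; everything else reduces to unwinding the definitions and using the standing mean equicontinuity hypothesis exactly once, to guarantee continuity of the projection.
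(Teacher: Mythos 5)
Your argument is correct. Note that the paper itself gives no proof of this proposition: it is attributed to Downarowicz--Glasner, with the remark that the argument is only implicit in the proof of their Theorem~2.1. Your write-up is essentially the direct verification that underlies that citation, and it is sound as stated: $\varphi$-invariance of $d_\sB$ makes the quotient dynamics a $\bar d_\sB$-isometry, mean equicontinuity is used exactly once to get continuity of the canonical projection into the metric quotient, and maximality is obtained by passing to an invariant metric on an arbitrary equicontinuous factor $(Z,\rho,p)$, so that $d_Z(p\varphi^i(x),p\varphi^i(y))$ is constant in $i$ and $x\sim y$ forces $p(x)=p(y)$. Two points you gloss over are worth one line each: first, the quotient topology on $X/\!\sim$ and the $\bar d_\sB$-topology coincide, since the identity map from the (compact) quotient topology to the (Hausdorff) metric topology is a continuous bijection and hence a homeomorphism; second, $\pi$ is a closed, hence quotient, map onto the compact metric space $(X/\!\sim,\bar d_\sB)$, which is what justifies continuity of the induced map $q$. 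With these remarks your proof is complete and self-contained, arguably more explicit than the treatment available in the cited source.
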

The proof of this fact in
\cite{DownarowiczGlasner2015IsomorphicExtensionsAndMeanEquicontinuity} is
implicit -- it is contained in the proof of \cite[Theorem
  2.1]{DownarowiczGlasner2015IsomorphicExtensionsAndMeanEquicontinuity}.

\subsection{The Anosov-Katok method} \label{AnosovKatok}

The Anosov-Katok method is arguably one of the best-known and most widely used
constructions in smooth dynamics and allows to obtain a broad scope of examples
with particular combinations of dynamical properties. Although many readers will
already be familiar with the general method, we provide a brief introduction in
order to fix notation and comment on some specific issues that will be relevant
in our context. The construction of mean equicontinuous diffeomorphism of the
two-torus will then be carried out in Section~\ref{MainConstruction}, while the
modification required to obtain the finite-to-one extensions in
Theorem~\ref{t.m:1-topomorphic_extensions} will be discussed in
Section~\ref{NewExamples}. 

We restrict to the case of tori $\T^d=\R^d/\Z^d$ and denote by $\homeo\kl\T^d\kr$
the space of homeomorphisms of the $d$-dimensional torus, by $\cC^k\kl\T^d\kr$ the
space of $k$-times differentiable torus endomorphisms (including the cases
$k=\infty$ and $k=\omega$, were the later stands for `real-analytic') and let
\[
\diffeo^k\kl\T^d\kr \ = \ \left\{ \varphi\in\homeo(\T^d) \mid \varphi,\varphi^{-1}\in\cC^k\kl\T^d\kr\right\} \ .
\]
We will identify $\diffeo^0\kl\T^d\kr$ and $\homeo\kl\T^d\kr$. Further, we
denote the supremum metric on $\cC^0\kl\T^d\kr$ by $d_{\sup}$ and let
\[
d_k(\varphi,\psi) \ =
\ \max_{i=0}^k\max\left\{d_{\sup}\kl\varphi^{(i)},\psi^{(i)}\kr,d_{\sup}\kl
\kl\varphi^{-1}\kr^{(i)}, \kl\psi^{-1}\kr^{(i)}\kr \right\} \ . 
\]
be the standard metric on the space of torus diffeomorphisms.  By
$B^k_\eps(\psi)$, we denote the $\eps$-ball around $\psi$ in
$\diffeo^k(\T^d)$.\medskip

Our aim is to recursively construct a sequence \nfolge{\varphi_n} of torus
diffeomorphisms according to the following scheme.
\begin{itemize}
\item Each $\varphi_n$ will be of the form $H_n\circ R_{\rho_n}\circ
  H_n^{-1}$, where $R_{\rho} :\T^d\to\T^d,\ x\mapsto x+\rho$ denotes the
  rotation with rotation vector $\rho\in\T^d$ and $H_n\in\diffeo^{\infty}\kl
  \T^d\kr$.
\item The $H_n$ will be of the form $H_n=h_1\circ \ldots\circ h_n$, where each
  $h_n\in\diffeo^{\infty}\kl\T^d\kr$ commutes with the rotation
  $R_{\rho_{n-1}}$, that is, $h_n\circ R_{\rho_{n-1}} =
  R_{\rho_{n-1}} \circ h_n$. Note that consequently we have that $H_n\circ
  R_{\rho_{n-1}}\circ H_n^{-1}=\varphi_{n-1}$. Hence, at this stage, we have
  introduced the new conjugating map $h_n$, but the system has not changed yet.
\item When going from step $n-1$ to $n$ in the construction, we choose $h_n$
  first and only pick the new rotation vector $\rho_n$ afterwards. Therefore,
  the continuity of the mapping $\rho\mapsto H_n\circ R_\rho\circ H_{n}^{-1}$
  (with respect to the metric $d_k$ for any $k\in\N$) allows to control the
  difference between $\varphi_{n-1}$ and $\varphi_n$ in the respective metric.
\item As a consequence, we can ensure that the resulting sequence
  \nfolge{\varphi_n} is Cauchy in $\diffeo^k\kl\T^d\kr$ for any $k\in\N$, simply
  by recursively choosing $\rho_n$ sufficiently close to $\rho_{n-1}$ with
  respect to $d_n$ in the $n$-th step of the construction. This ensures that the
  $\varphi_n$ converge to some limit $\varphi\in\diffeo^{\infty}\kl\T^d\kr$.
\end{itemize}
So far, the above items explain how to ensure the convergence of the constructed
sequence \nfolge{\varphi_n}, but they do not yet specify how to obtain any
particular dynamical properties. It turns out, however, that the method is
tailor-made to realise any $G_\delta$-properties in the space
$\homeo(\T^d)$. Suppose we want to ensure that our limit diffeomorphism
$\varphi$ belongs to a set of torus homeomorphism $A\ssq \homeo\kl\T^d\kr$ which
is $G_\delta$, that is, it is of the form $A=\ncap U_n$ with
$U_n\ssq\homeo\kl\T^d\kr$ open. As discussed below, both minimal and uniquely
ergodic torus homeomorphisms can be characterised in this way. We then proceed
as follows.
\begin{enumerate}[i)]
\item By choosing $h_n$ and $\rho_n$ accordingly, we ensure that $\varphi_n\in
  U_n$. How this is done exactly depends on the property that defines $A$. This
  is actually the crucial step in the construction, and we will provide details
  further below.
\item \label{item:Gdelta} Since $U_n$ is open, there exists $\eta_n>0$ such that
  $\overline{B^0_{\eta_n}(\varphi_n)}\ssq U_n$. By ensuring that the distance
  between $\varphi_j$ and $\varphi_{j+1}$ is small and decays sufficiently fast
  for all $j\geq n$, this yields $\varphi=\lim_{j\to\infty}
  \varphi_j\in\overline{B^0_{\eta_n}(\varphi_n)}\ssq U_n$ as well. Since this
  works for all $n\in\N$, we obtain $\varphi\in A$.
\item \label{item:automaticUn} In order to ensure $\varphi_n\in U_n$, it will often be convenient not to go
  from $\varphi_{n-1}$ to $\varphi_n$ directly, but to pass through some
  intermediate map $\hat\varphi_{n-1}$ instead. This is, for example, useful to
  ensure that the limit system is minimal and/or uniquely ergodic. For instance,
  we may first choose a totally irrational rotation number $\hat\rho_{n-1}$
  and define $\hat\varphi_{n-1}=H_n\circ R_{\hat\rho_{n-1}}\circ
  H_n^{-1}$. Then $\hat\varphi_{n-1}$ is minimal and uniquely ergodic, since
  this is true for the irrational rotation $R_{\hat\rho_{n-1}}$. If $U_n$ is
  defined in such a way that it contains all minimal/uniquely ergodic torus
  homeomorphisms, then $\hat\varphi_{n-1}\in U_n$ is automatic. If $\varphi_n$
  is then chosen sufficiently close to $\hat\varphi_{n-1}$ (by choosing
  $\rho_n$ close to $\hat\rho_{n-1}$), we obtain $\varphi_n\in U_n$ as
  required. Further, if both $\hat\rho_{n-1}$ and $\rho_n$ are close enough
  to $\rho_{n-1}$, then $\varphi_n$ will also be close to $\varphi_{n-1}$ in
  $\diffeo^k(\T^d)$.
\item In Section~\ref{MainConstruction}, we will actually use a further
  modification of the above scheme and pass through an additional third map
  $\tilde\varphi_{n-1}=H_n\circ R_{\tilde\rho_{n-1}}\circ H_n^{-1}$, where
  $\tilde\rho_{n-1}$ is irrational, but not totally irrational (so the entries
  of the rotation vector are rationally related).
\end{enumerate}

The following works in high generality for any compact metric space $X$.
Using a very similar argument as made in \ref{item:Gdelta}) we obtain
\begin{prop}
	\label{thm:strictergo} 
	Let $A = \bigcap_{n\in\N} U_n \subseteq \Homeo(X)$ where the $U_n$ are
        open.  There exists a sequence $\kl \eta^A_n \kr_{n \in \N}$ of
        functions \[ \eta_n^A: \Homeo(X)^n \longrightarrow [0,2^{-n}] \] such
        that if a sequence $\nfolge{\hat\varphi_n}$ of torus homeomorphisms
        satisfies \[ d_0\kl \hat\varphi_n,\hat\varphi_{n+1} \kr < {\eta^A_n\kl
          \hat\varphi_1,\ldots,\hat\varphi_n \kr}\] and $\hat\varphi_n \in U_n$
        for all $n\in\N$, then it converges (by completeness) with limit
        \mbox{$\varphi=\nLim \hat\varphi_n \in A$} .
\end{prop}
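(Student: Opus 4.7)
The plan is to define the gauge functions $\eta^A_n$ so that, at each stage $n$, the openness radius already available around $\hat\varphi_n$ inside $U_n$ absorbs all later perturbations. For $\psi \in \Homeo(X)$ and $k \in \N$, set
\[
R_k(\psi) \ := \ d_0\bigl(\psi,\ \Homeo(X)\setminus U_k\bigr),
\]
which is strictly positive exactly when $\psi\in U_k$, since $U_k$ is open. Given a tuple $(\hat\varphi_1,\ldots,\hat\varphi_n)\in\Homeo(X)^n$, I would define
\[
\eta^A_n(\hat\varphi_1,\ldots,\hat\varphi_n) \ := \ \min\!\Bigl( 2^{-n},\ \min_{1\leq k\leq n}\tfrac{R_k(\hat\varphi_k)}{2^{n-k+2}} \Bigr).
\]
This lies in $[0,2^{-n}]$, is strictly positive whenever $\hat\varphi_k\in U_k$ for all $k\leq n$, and depends only on the first $n$ coordinates, as required.

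The first step of the verification is that, given a sequence $(\hat\varphi_n)$ satisfying the hypotheses, the bound $d_0(\hat\varphi_n,\hat\varphi_{n+1})<\eta^A_n\leq 2^{-n}$ together with the triangle inequality yields $d_0(\hat\varphi_n,\hat\varphi_m)<2^{-n+1}$ for all $m\geq n$. Hence $(\hat\varphi_n)$ is Cauchy in $(\Homeo(X),d_0)$ and by completeness converges to some limit $\varphi$.

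For the second step I would verify that $\varphi\in U_n$ for every fixed $n$. Selecting the $k=n$ term in the minimum that defines $\eta^A_m$ (which is legitimate for every $m\geq n$) gives $\eta^A_m\leq R_n(\hat\varphi_n)/2^{m-n+2}$, so the triangle inequality combined with a geometric series yields
\[
d_0(\hat\varphi_n,\varphi) \ \leq \ \sum_{m=n}^{\infty} d_0(\hat\varphi_m,\hat\varphi_{m+1}) \ \leq \ \sum_{m=n}^{\infty}\frac{R_n(\hat\varphi_n)}{2^{m-n+2}} \ = \ \frac{R_n(\hat\varphi_n)}{2} \ < \ R_n(\hat\varphi_n).
\]
By the very definition of $R_n(\hat\varphi_n)$, this means $\varphi$ lies in the open $R_n(\hat\varphi_n)$-ball around $\hat\varphi_n$, which is contained in $U_n$. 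Intersecting over all $n$ gives $\varphi\in A$.

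There is no real obstacle here; the only subtlety worth flagging is that $\eta^A_n$ genuinely has to depend on all of $\hat\varphi_1,\ldots,\hat\varphi_n$ rather than on $\hat\varphi_n$ alone. The radii $R_k(\hat\varphi_k)$ fixed at earlier steps have to keep controlling the cumulative tail of perturbations from step $n$ onwards, and the explicit double minimum above is exactly what enforces this.
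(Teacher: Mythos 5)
Your proof is correct and takes essentially the same approach as the paper: at each stage you fix a radius around $\hat\varphi_n$ witnessing $\hat\varphi_n\in U_n$ and force all later gauges to be so small that the cumulative tail of perturbations stays strictly inside that ball, hence the limit lies in every $U_n$. The only difference is bookkeeping --- you split the distance $R_n(\hat\varphi_n)$ to the complement of $U_n$ via an explicit geometric series, while the paper's sketch tracks the remaining budget recursively by subtracting the distances already used --- and your explicit formula is, if anything, the cleaner way to make the same argument precise.
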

\begin{proof}[Sketch of proof]
	If we set $\eta_n(\varphi_1,\ldots,\varphi_n)< 2^{-n}$ the sequence will be Cauchy and converge to $\varphi \in Z$.
	Recursively ensure that 
	\[ \eta_n(\varphi_1,\ldots,\varphi_n) < \min_{k=1}^{n-1}\kl \eta_k(\varphi_1,\ldots,\varphi_k) - d(\varphi_k,\varphi_n) \kr \,.\]
	This way we will at each step respect the previously imposed conditions.
	Then finally pick $\eta > 0$ such that $\overline{B_\eta(\varphi_n)}\subseteq U_n$
	and impose the new condition $\eta_n(\varphi_1,\ldots,\varphi_n) < \eta$.
	In total $\varphi_m \in B_\eta(\varphi_n) \subseteq \overline{B_\eta(\varphi_n)}$ for all $m > n$.
	So $\varphi \in \overline{B_\eta(\varphi_n)} \subseteq U_n$ for all $n \in \N$
	and thus $\varphi \in \bigcap_{n\in\N}U_n = A$.
\end{proof}

\subsection{$G_\delta$-characterisation of strict ergodicity}

It is well-known that the set $\homeo^\mathrm{se}(X)$ of strictly ergodic
homeomorphisms of $X$ is $G_\delta$ in $\homeo(X)$.  For the convenience of the
reader, we include a short proof of this folklore result.\smallskip

We first show that minimality is $G_\delta$.  Let $\varepsilon > 0$. A set $A
\subseteq X$ is called \textit{$\varepsilon$-dense} if 
$B_\varepsilon(A) = X$.  Observe that the mapping $\xi$ is minimal if and only
if for any $\varepsilon > 0$ there is $M \in \N$ such that $\ml x, \varphi(x),
\ldots, \varphi^{M}(x) \mr$ is $\varepsilon$-dense for any $x \in X$.
Furthermore, the $k$-th iterate of a homeomorphism
depends continuously on that homeomorphism.
Therefore
\[
U^{\min}_{M,\varepsilon} = \ml \psi \in \Homeo(X,X) \mm \forall x \in X: \ml x,
\psi(x), \ldots, \psi^M(x) \mr \text{ is }\varepsilon\text{-dense} \mr
\]
is open in the supremum norm.  If $\homeo^{\min}(X)$  denotes the set of all
minimal homeomorphisms of $X$, the above yields
\[ \homeo^{\min}(X) = \bigcap_{\varepsilon \in \Q^+} \bigcup_{M\in\N} U^{\min}_{M,\varepsilon} \,. \]
This means in particular that $\homeo^{\min}(X)$ is $G_\delta$.\smallskip

Now, we turn to unique ergodicity.  Fix a dense set $\ml s_n \mm n \in \N \mr
\subset \cC(X,\R)$.  Given $n \in \N$ and a continuous map $\xi:X\to X$, we can
assign to any $g \in \cC(X,\R)$ its $n$-step ergodic average
\[ A_n^\xi g = \frac{1}{n}\sum_{i=0}^{n-1} g\circ \xi^i \, .\]
It is well known (e.g.\ \cite[Thm.\ 4.10, p.\ 105]{Einsiedler2010-ln}) that
$\xi$ is uniquely ergodic if and only if for every $k \in \N$ the sequence of
functions $(A_n^\xi s_k)_{n\in\N}$ converges pointwise to a constant.  For $g
\in \cC(X,\R)$ we denote by $V(g)$ its variation over $X$, that is
\[
	V(g) := \sup_{x\in X} g(x)- \inf_{y \in X} g(y)\  .
\]
Given any $\psi$-invariant probability measure $\mu$, we have $\int A_n^\psi g
\dd\mu = \int g \dd\mu$.  This implies that if $V(A_n^\psi g) \xrightarrow{n\to\infty} 0$, then
$A_n^\psi g \xrightarrow{n\to\infty} \int g \dd\mu$.

As $A_k^\xi g$ depends continuously on $\xi$ and $V(f)$ depends continously on
$f$, we see that
\[
U^{\mathrm{ue}}_{n,K,\beta} := \ml {\psi} \in \Homeo(X,X) \mm V(A_K^{\psi} s_n)
< \beta \mr
\]
is open in the supremum metric.  In particular, the set of those transformations
for which the ergodic average of $s_n$ eventually stabilises at a variation
below $\beta$, given by
\[
U^{\mathrm{ue}}_{n,\beta} = \bigcup_{K \in \N}
\bigcap_{k > K} U^{\mathrm{ue}}_{n,k,\beta}
\]
is $G_\delta$.  We notice that $\psi \in \bigcap_{k\in\N} \bigcap_{\beta \in
  \Q^+} U^{\mathrm{ue}}_{k,\beta}$ if and only if $A_n^\psi s_n$ converges to
$\int s_n \dd\mu$ for any $n \in \N$ if and only if $\psi$ is uniquely ergodic.
This yields that the set $\homeo^\mathrm{ue}(X)$ of uniquely ergodic
homeomorphisms of $X$ is $G_\delta$.
\smallskip

Altogether, this shows that
$\homeo^\mathrm{se}(X)=\homeo^\mathrm{min}(X)\cap\homeo^\mathrm{ue}(X)$ is a
$G_\delta$-set as claimed.

\section{$G_\delta$-characterisation of mean equicontinuity for skew products}
\label{GDeltaMeanEquicontinuity}

As discussed in the previous section, in order to construct mean equicontinuous
systems via the Anosov Katok method, it is instrumental to have an explicit
$G_\delta$-characterisation of mean equicontinuity available. In principle, such
a characterisation is already contained in
\cite{DownarowiczGlasner2015IsomorphicExtensionsAndMeanEquicontinuity}. However,
the latter uses the fact that mean equicontinuity is equivalent to the existence
of a unique self-joining on the product space $X\times X$ over the MEF as a
common factor. Since we want to avoid working in the product space, as this
would rather complicate the construction in the next section, we provide an
alternative characterisation here. As in
\cite{DownarowiczGlasner2015IsomorphicExtensionsAndMeanEquicontinuity}, we make
use of the fact that we are in a skew product setting and the factor map is
given {\em a priori} (by the projection to the first coordinate). We formulate
the statement in abstract terms, as it might be useful in other situations as
well.

\begin{prop}
  \label{prop:characterizingme} Let $(X,\varphi)$ be a tds and
  $(Y,\psi,\pi)$ an equicontinuous factor. Then $(Y,\psi,\pi)$ is a MEF of
  $(X,\varphi)$ and $(X,\varphi)$ is an isomorphic extension of $(Y,\psi)$ if
  and only if for all $\varepsilon > 0$ there exists some $K \in \N$ such that,
  for all $x,y\in X$, we have \begin{align} \pi(x)=\pi(y) \quad \Longrightarrow
    \quad \frac 1{K} \sum_{i=0}^{K} d_X(\varphi^i(x),\varphi^i(y)) < \varepsilon
    \,. \label{eq:finitetimeaverages} \end{align}
\end{prop}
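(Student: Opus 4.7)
The plan is to reformulate both implications as statements about the compact, $\Phi := \varphi\times\varphi$-invariant fibred product $X\times_Y X := \{(x,y)\in X\times X : \pi(x)=\pi(y)\}$, on which the continuous function $d_X$ is the quantity whose Cesàro averages encode (\ref{eq:finitetimeaverages}). The bridge between the two directions is the following reformulation: $(X,\varphi)$ is an isomorphic extension of $(Y,\psi)$ with $(Y,\psi,\pi)$ a MEF if and only if every $\Phi$-invariant probability measure on $X\times_Y X$ is concentrated on the diagonal. Granted this, (\ref{eq:finitetimeaverages}) translates into uniform convergence of the relevant ergodic averages via the generalised form of the Uniform Ergodic Theorem mentioned just after Theorem~\ref{thm:simergothm}, valid whenever every invariant measure assigns the same integral to the target function.

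For the forward direction, I would first show that every $\Phi$-invariant probability $\eta$ on $X\times_Y X$ lives on the diagonal $\Delta=\{(x,x):x\in X\}$. The two marginals of $\eta$ are $\varphi$-invariant, hence both equal to the unique $\varphi$-invariant measure $\mu$ provided by Theorem~\ref{thm:downarowiczglasner}. Disintegrating $\mu=\int\mu_y\,d\nu(y)$ over the Haar measure $\nu$ on $Y$, the hypothesis that $\pi$ is a measure-theoretic isomorphism forces the conditional measures $\mu_y$ to be Dirac masses $\delta_{s(y)}$ for a measurable section $s:Y\to X$. Both marginals of the corresponding conditional $\eta_y$ on $\pi^{-1}(y)\times\pi^{-1}(y)$ are then $\delta_{s(y)}$, so $\eta_y=\delta_{(s(y),s(y))}$ for $\nu$-a.e.\ $y$, and thus $\eta$ is indeed diagonal. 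In particular $\int d_X\,d\eta=0$ for every invariant $\eta$, so the generalised Uniform Ergodic Theorem yields that $\tfrac{1}{K}\sum_{i=0}^{K-1}d_X\circ\Phi^i$ converges to $0$ uniformly on $X\times_Y X$, which is precisely (\ref{eq:finitetimeaverages}).

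For the converse, (\ref{eq:finitetimeaverages}) immediately implies $\int d_X\,d\eta\le\varepsilon$ for every $\varepsilon>0$ and every $\Phi$-invariant $\eta$, so $\int d_X\,d\eta=0$ and once again every such $\eta$ is concentrated on $\Delta$. To obtain unique ergodicity of $(X,\varphi)$, take two $\varphi$-invariant probabilities $\mu_1,\mu_2$ on $X$; their projections to $Y$ both coincide with $\nu$ by unique ergodicity of $(Y,\psi)$, and the relative product $\eta:=\int\mu_{1,y}\otimes\mu_{2,y}\,d\nu(y)$ is a $\Phi$-invariant measure on $X\times_Y X$. Its support lying in the diagonal forces $\mu_{1,y}=\mu_{2,y}=\delta_{a(y)}$ for some measurable section $a:Y\to X$, so $\mu_1=\mu_2=a_*\nu$. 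The identity $a\circ\pi=\Id_X$ on a full-measure set exhibits $\pi$ as a measure-theoretic isomorphism, hence $(X,\varphi)$ is an isomorphic extension; mean equicontinuity then follows from Theorem~\ref{thm:downarowiczglasner} (under the standing minimality of $(X,\varphi)$), and maximality of $(Y,\psi,\pi)$ among equicontinuous factors follows by comparing with any other MEF via Halmos--von Neumann, since a measure-theoretic isomorphism between minimal equicontinuous systems is automatically a topological conjugacy.

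The main obstacle is the uniformity in the forward direction: pointwise, it is almost immediate from Proposition~\ref{thm:besicovitchdistancezero} that $\pi(x)=\pi(y)$ implies $d_{\sB}(x,y)=0$, but upgrading this to a single $K$ valid simultaneously for all pairs in $X\times_Y X$ relies on the full strength of the ``only diagonal self-joining'' fact together with the version of the Uniform Ergodic Theorem that works on a system which is typically not uniquely ergodic.
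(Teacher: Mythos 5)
Your argument is sound in outline, but it takes a genuinely different route from the paper, most visibly in the converse direction. In the forward direction the paper also works on the fibred product $\cJ_\pi(X)=\{(x,y)\mid\pi(x)=\pi(y)\}$ and also concludes via the generalised Uniform Ergodic Theorem recorded after Theorem~\ref{thm:simergothm}; the difference is how the vanishing $\int d_X\,d\gamma=0$ for all invariant $\gamma$ is obtained: the paper gets it from Proposition~\ref{thm:besicovitchdistancezero} ($\pi(x)=\pi(y)$ forces $d_\sB(x,y)=0$, so the Birkhoff averages of $d_X$ vanish identically on the compact invariant set), while you get it from a disintegration argument showing every invariant measure on the fibred product is diagonal. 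In the converse the paper avoids joinings altogether: taking $\psi$ to be an isometry, it upgrades the finite-time estimate to $d_\sB<\varepsilon$ on an invariant neighbourhood $(\pi\times\pi)^{-1}\kl\overline{B_\eta(\Delta_Y)}\kr$, concludes mean equicontinuity, invokes Theorem~\ref{thm:downarowiczglasner}, and identifies the fibres of $\pi$ with those of the MEF via the Besicovitch characterisation. You instead extract unique ergodicity and a.e.\ injectivity of $\pi$ from the relatively independent self-joining over $Y$, bypassing $d_\sB$ entirely, and then treat maximality by Halmos--von Neumann. Your route never needs mean equicontinuity; the paper's route keeps everything inside the Downarowicz--Glasner results and yields mean equicontinuity explicitly.

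One step of yours needs more care. ``A measure-theoretic isomorphism between minimal equicontinuous systems is automatically a topological conjugacy'' is not quite the statement required: what you must show is that the specific factor map $q:\hat Y\to Y$ with $\pi=q\circ\hat\pi$ (where $(\hat Y,\hat\psi,\hat\pi)$ is a MEF) is injective; abstract conjugacy of $\hat Y$ and $Y$ does not by itself identify $(Y,\psi,\pi)$ as a MEF. The gap is fillable: since $\pi$ is a.e.\ injective, so are $\hat\pi$ and $q$; after composing with a rotation, $q$ is a continuous epimorphism of compact abelian groups, and a nontrivial kernel would make every fibre of $q$ contain at least two points, contradicting a.e.\ injectivity with respect to Haar measure (equivalently, equality of the eigenvalue groups forces the dual inclusion $\widehat{Y}\hookrightarrow\widehat{\hat Y}$ to be onto). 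You also need, as you flag, minimality of $(X,\varphi)$ for unique ergodicity of $(Y,\psi)$ and for Theorem~\ref{thm:downarowiczglasner}; the paper's proof uses the same standing assumption.
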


\begin{rem} \label{r.mean_equicontinuity}
  \alphlist
  \item Denote by $\homeo^\mathrm{eq}(Y)$ the space of equicontinuous homeomorphisms
  of $Y$. Consider the space
  \[
  \cE(\pi)\ = \ \{\varphi\in\homeo(X)\mid \exists
  \psi\in\homeo^\mathrm{eq}(Y): \pi\circ\varphi=\psi\circ \pi\}
  \]
  with the subspace
  \[
    \cE^{\mathrm{iso}}(\pi) \ =
  \ \left\{\varphi\in\cE(\pi)\left| \begin{split} \exists & \psi\in\homeo(Y):
  (Y,\psi,\pi) \textrm{ is the MEF of } (X,\varphi)\\ & \quad \textrm{ and }
  (X,\varphi) \textrm{ is its isomorphic extension} \end{split}\right.\right\}
   \ .
  \]
  Let
  \[
  U_n^{\mathrm{iso}}(\pi) \ = \ \{ \varphi\in\cE(\pi)\mid \exists
  K\in\N: (\ref{eq:finitetimeaverages}) \textrm{ holds with } \eps=1/n \} \ .
  \]
  Then, by the above statement, we have $\cE^\mathrm{iso}(\pi)=\ncap
  U_n^{\mathrm{iso}}(\pi)$. As the sets $U_n^{\mathrm{iso}}(\pi)$ are open, this
  implies that $\cE^{\mathrm{iso}}(\pi)$ is a $G_\delta$-set in $\cE(\pi)$.

  Note here that property (\ref{eq:finitetimeaverages}) only depends on the
  factor map $\pi$, but not on the map $\psi$ acting on the factor space.
\item
  A similar characterisation could be given with a fixed factor system
  $(Y,\psi)$ on the base. However, in the context of the Anosov Katok
  construction, where the base systems of the approximating diffeomorphisms will
  be circle rotations with varying rotation numbers, the independence of $\psi$
  in the above characterisation is crucial.
\item According to Proposition~\ref{thm:strictergo}, there are
  mappings
  \[ \eta_n^\mathrm{me}:
		\Homeo(\T^d)^n \longrightarrow [0,2^{-n}] \] such that if a sequence
          $\nfolge{\tilde\varphi_n}$ in $\cE(\pi)$ satisfies $\tilde\varphi_n\in
          U_n^\mathrm{iso}(\pi)$ and \[ d_0\kl
          \tilde\varphi_n,\tilde\varphi_{n+1} \kr < {\eta^\mathrm{me}_n\kl
            \tilde\varphi_1,\ldots,\tilde\varphi_n \kr}\] for all $n\in\N$, then
          its limit \mbox{$\varphi=\nLim \tilde\varphi_n$} exists and belongs to
          $\cE^\mathrm{iso}(\pi)$.
          \listend
\end{rem}
\begin{proof}[\bfseries{Proof of Proposition~\ref{prop:characterizingme}}]
First, assume that $\kl Y,\psi,\pi\kr$ is a MEF and $\kl X,\varphi\kr$ is an
isomorphic extension of $\kl Y,\psi\kr$.  Then, by Theorem~
\ref{thm:besicovitchdistancezero}, $\pi(x)=\pi(y)$ implies $d_\sB(x,y)=0$.  Let
\[
\cJ_\pi\kl X\kr\ =\ \{(x,y)\in X\times X\mid \pi(x)=\pi(y)\}\ .
\]
Then $d_\sB(x,y)$ is the ergodic average of the function $d_X$ for the action of
$\varphi\times\varphi$ on $X\times X$. Since these ergodic averages are
identically zero on the compact invariant set $\cJ_\pi\kl X\kr$, we have that
$\int_{\cJ_\pi\kl X\kr} d_X(x,y) \ d\gamma(x,y)=0$ for all
$\varphi\times\varphi$-invariant measures $\gamma$ on $\cJ_\pi\kl X\kr$ (in
fact, by \cite[Proposition
  2.5]{DownarowiczGlasner2015IsomorphicExtensionsAndMeanEquicontinuity}, there
is only one such measure when $(X,\varphi)$ is mean equicontinuous). The Uniform
Ergodic Theorem therefore implies that the functions
\[
a_K(x,y)\ = \ \frac 1{K} \sum_{i=0}^{K} d_X\kl\varphi^i(x),\varphi^i(y)\kr
\]
 uniformly converge to zero as $K\to\infty$. Hence, we have $a_K<\eps$ for
 sufficiently large $K\in\N$, which is just an equivalent reformulation of
 (\ref{eq:finitetimeaverages}).\smallskip

Conversely, suppose that for all $\eps>0$ there exists $K\in\N$ such that
(\ref{eq:finitetimeaverages}) holds. We assume without loss of generality that
$\psi$ is an isometry. Denote by $\Delta_Y$ and $\Delta_X$ the diagonals in the
respective product spaces $Y\times Y$ and $X\times X$. Note that thus
$\cJ_\pi\kl X\kr=\kl\pi\times\pi\kr^{-1}\kl\Delta_Y\kr$.

Now, fix $\eps>0$ and choose $K\in\N$ according to
(\ref{eq:finitetimeaverages}). This means that the function $a_K$ is strictly
smaller than $\eps$ on $\cJ_\pi\kl X\kr$. By compactness of $\cJ_\pi\kl X\kr$
and continuity of $a_K$, there exists $\delta_1>0$ such that $a_K<\eps$ on
$B_{\delta_1}\kl J_\pi\kl X\kr\kr$. Due to the continuity of $\pi\times\pi$,
there exists $\eta>0$ such that $A =\kl\pi\times\pi\kr^{-1}\kl
\overline{B_\eta\kl\Delta_Y\kr}\kr\ssq B_{\delta_1}\kl \cJ_\pi\kl
X\kr\kr$. Further, as $\pi$ is uniformly continuous, there exists $\delta>0$
such that $\pi\times\pi\kl B_\delta\kl \cJ_\pi(X)\kr\kr \ssq
B_\eta\kl\Delta_Y\kr$.

As $\psi$ is an isometry, the set $A$ is $\varphi\times\varphi$-invariant. Since
$d_\sB$ is equal to the ergodic average of $a_K$ for the action of
$(\varphi\times\varphi)^K$ and $a_K<\eps$ on $A$, this yields $d_\sB<\eps$ on
$A$. However, by the above choices, $d_X(x,y)<\delta$ implies
$d_Y(\pi(x),\pi(y))<\eta$ and therefore $(x,y)\in A$. Hence, we obtain that
$d_X(x,y)<\delta$ implies $d_\sB(x,y)<\eps$. As $\eps>0$ was arbitrary, this
means that $(X,\varphi)$ is mean equicontinuous.

Therefore Theorem~\ref{thm:downarowiczglasner} implies that $(X,\varphi)$ is an
isomorphic extension of its MEF, which we denote by $(\hat Y,\hat\psi,\hat
\pi)$. By definition, we know that $(Y,\psi)$ is a factor of the MEF, so the
fibres of $\hat\pi$ are contained in the fibres of $\pi$. However, since the
Besicovitch distance $d_\sB$ between two points is zero on each fibre of $\pi$,
and as this property characterises the fibres of the MEF due to
Proposition~\ref{thm:besicovitchdistancezero} (note here that the fibres do not
depend on the particular choice of the MEF), the fibres of $\pi$ are also
contained in the fibres of $\hat \pi$. Thus, $\pi$ and $\hat\pi$ have the same
fibres, which implies that $(Y,\psi,\pi)$ is also a MEF and $(X,\varphi)$ is an
isomorphic extension of $(Y,\psi)$.
\end{proof}

\section{Mean equicontinuous skew products on the torus:
  Proof of Theorem~\ref{t.meanequi_with_full_fibres}}
\label{MainConstruction}

We are going to construct a mean equicontinuous diffeomorphisms of the two-torus
which have skew product form
\begin{equation}\label{e.mainconst_skewproduct}
  \varphi:\torus\to\torus\quad , \quad (x,y)\mapsto (x+\alpha,\varphi_x(y)) \  
\end{equation}
and are such that the underlying irrational rotation
$R_\alpha:\kreis\to\kreis,\ x\mapsto x+\alpha$ is the MEF and the factor map is
given by the projection $\pi:\torus\to\kreis,\ (x,y)\mapsto x$ to the first
coordinate. In order to do so, we employ the Anosov Katok method as described in
Section \ref{AnosovKatok} and recursively define sequences of skew product
diffeomorphism \nfolge{\varphi_n}, \nfolge{\tilde\varphi_n} and
\nfolge{\hat\varphi_n} whose common limit $\varphi$ will satisfy the assertions
of Theorem~\ref{t.meanequi_with_full_fibres}. The general scheme of our
inductive construction will be as follows.
\begin{itemize}
\item The mappings $\varphi_n,\tilde\varphi_n$ and $\hat\varphi_n$ will be of
  the form 
		\[ \varphi_n=H_n\circ R_{\rho_n}\circ H_n^{-1}\,,\quad \tilde\varphi_n=H_{n+1}\circ R_{\tilde\rho_n}\circ H_{n+1}^{-1}
		\quad \text{ and }\quad \hat\varphi_n=H_{n+1}\circ
                R_{\hat\rho_n}\circ H_{n+1}^{-1} \,,\] where $\rho_n$ is
                rational, $\tilde\rho_n=\alpha\rho_n$ with $\alpha\in\R\smin\Q$
                and $\hat\rho_n$ is totally irrational. Further, for technical
                reasons, we require that
  \begin{equation}\label{e.relatively_prime}
  \rho_n \ = \ \left(\frac{p_n}{q_n},\frac{p_n'}{q_n}\right) \quad \textrm{ with }
  p_n,p_n',q_n\in\N \textrm{ relatively prime} \ .
  \end{equation}
\item The conjugating diffeomorphisms $H_n$ will be of the form $H_n=h_1\circ \ldots \circ h_n$,
  where $h_{n+1}$ always commutes with the rotation
  $R_{\rho_n}$. Moreover, all $h_n$ have skew product structure
  $h_n:(x,y)\mapsto (x,h_{n,x}(y))$, with fibre maps $h_{n,x}:\kreis\to\kreis$.
\item We choose the functions $\eta^\mathrm{se}_n$ and $\eta^\mathrm{me}_n$ and
  the sets $U_n^\mathrm{iso}(\pi)$ according to Proposition~\ref{thm:strictergo}
  and Remark~\ref{r.mean_equicontinuity}(c).
\item The approximating torus diffeomorphisms
  $\varphi_n,\tilde\varphi_n,\hat\varphi_n$ will be chosen such that for each
  $n\geq 2$ they satisfy
  \begin{eqnarray}
    d_n\kl\varphi_n,\tilde\varphi_{n}\kr & \leq & \drittel\label{e.mainconstr1}
    \min\left\{\eta^\mathrm{se}_{n-1}\kl\hat\varphi_1\ld
    \hat\varphi_{n-1}\kr,\eta^\mathrm{me}_{n-1}\kl\tilde\varphi_1\ld
    \tilde\varphi_{n-1}\kr\right\} \\ \label{e.mainconstr2}
    d_n\kl\tilde\varphi_n,\hat\varphi_{n}\kr & \leq & \drittel
    \min\left\{\eta^\mathrm{se}_{n-1}\kl\hat\varphi_1\ld
    \hat\varphi_{n-1}\kr,\eta^\mathrm{me}_{n}\kl\tilde\varphi_1\ld
    \tilde\varphi_{n}\kr\right\} \\ \label{e.mainconstr3}
    d_n\kl\hat\varphi_n,\varphi_{n+1}\kr & \leq & \drittel
    \min\left\{\eta^\mathrm{se}_{n}\kl\hat\varphi_1\ld
    \hat\varphi_{n}\kr,\eta^\mathrm{me}_{n}\kl\tilde\varphi_1\ld
    \tilde\varphi_{n}\kr\right\} \\ \tilde\varphi_n & \in &
    U^\mathrm{iso}_n(\pi) \ .\label{e.phin_in_Uniso}
  \end{eqnarray}
  Note that these conditions together imply that
  \begin{eqnarray}
    d_n\kl\tilde\varphi_n,\tilde\varphi_{n+1}\kr & \leq &
    \eta_n^\mathrm{me}\kl\tilde\varphi_1\ld
    \tilde\varphi_n\kr\\ d_n\kl\hat\varphi_n,\hat\varphi_{n+1}\kr & \leq &
    \eta_n^\mathrm{se}\kl\hat\varphi_1\ld \hat\varphi_n\kr
  \end{eqnarray}
  for all $n\in\N$, which together with (\ref{e.phin_in_Uniso}) means that the
  conditions of Proposition~\ref{thm:strictergo} and
  Remark~\ref{r.mean_equicontinuity}(c) are met, where
  Proposition~\ref{thm:strictergo} is applied to the sequence
  \nfolge{\hat\varphi_n} and Remark~\ref{r.mean_equicontinuity}(c) is applied to
  \nfolge{\tilde\varphi_n}. Note here that the diffeomorphisms $\hat\varphi_n$
  are all strictly ergodic, since they are conjugate to a totally irrational
  torus rotation. Consequently, the common limit $\varphi$ of all the sequences,
  whose existence is also guaranteed by
  (\ref{e.mainconstr1})--(\ref{e.mainconstr3}) (note that
  $\eta^\mathrm{me}_n,\eta^\mathrm{se}_n\leq 2^{-n}$), is both strictly ergodic
  and mean equicontinuous, with $\pi$ as the factor map to the MEF. The MEF is
  then given by $(\kreis,R_\alpha,\pi)$, where $\alpha=\nLim \pi(\rho_n)$.
  \item Note that conditions (\ref{e.mainconstr1})--(\ref{e.mainconstr3}) can
    always be ensured by choosing the rotation vectors
    $\rho_n,\tilde\rho_n,\hat\rho_n$ and $\rho_{n+1}$ sufficiently close to each
    other. The reason is the fact that we have $\varphi_n=H_{n+1}\circ
    R_{\rho_n}\circ H_{n+1}^{-1}$ due to the commutativity between $h_{n+1}$ and
    $R_{\rho_n}$ combined with the continuity of $\rho\mapsto H_{n+1}\circ
    R_\rho\circ H_{n+1}^{-1}$.  Therefore, the only issue that remains to be
    addressed is to ensure that the intermediate maps $\tilde\varphi_n$ are
    indeed contained in $U^\mathrm{iso}_n(\pi)$.
\end{itemize}

In order to start the induction, we let $H_1=h_1=\Id_{\torus}$ and choose
$\varphi_1$ to be an arbitrary rational rotation on \torus\ whose rotation
vector satisfies~(\ref{e.relatively_prime}). Note that the inductive assumptions
(\ref{e.mainconstr1})--(\ref{e.mainconstr3}) are all still empty at this point.

Now, suppose that $\varphi_1\ld \varphi_N$,
$\tilde\varphi_1\ld\tilde\varphi_{N-1}$ and $\hat\varphi_1\ld\hat\varphi_{N-1}$
have been constructed such that (\ref{e.mainconstr1})--(\ref{e.phin_in_Uniso})
hold for all $n=1\ld N-1$. We have
$\rho_n=\left(\frac{p_n}{q_n},\frac{p_n'}{q_n}\right)$, where
$p_n,p_n',q_n\in\N$ are relatively prime. The aim is to choose $h_{n+1}$ and
$\tilde\rho_n$ in such a way that $\tilde\varphi_n\in U^\mathrm{iso}_n(\pi)$,
that is, $\tilde\varphi_n$ satisfies (\ref{eq:finitetimeaverages}) with
$\eps=1/n$.

To that end, we note that orbits of $R_{\rho_n}$ move along closed curves of the
form
\[
L(\rho_n,t) \ = \ \left\{(xp_n/q_n,t+xp_n'/q_n)\mid x\in [0,q_n) \right\} \ ,
  \]
  which are parametrised by the functions
  \[
  \ell_{\rho_n,t}:\kreis\to L(t,\rho_n)\quad , \quad x\mapsto (xp_n,t+xp_n') \ .
  \]
We now dwell on this insight a bit further in order to see how we need to choose
$h_{n+1}$. First, observe that the mapping $\ell_{\rho_n,t}$ conjugates the
one-dimensional rotation $r_{1/q_n}$ on $\kreis$ and the restriction of
$R_{\rho_n}$ to $L(t,\rho_n)$, that is, $R_{\rho_n}\circ \ell_{\rho_n,t} =
\ell_{\rho_n,t}\circ r_{1/q_n}$. Consequently, the orbits of
$\varphi_n=H_{n+1}\circ R_{\rho_n}\circ H_{n+1}^{-1}$ move along the curves
$H_{n+1}(L(\rho_n,t))$, and $H_{n+1}\circ \ell_{\rho_n,t}$ provides a conjugacy
between the action of $\varphi_n$ on these curves and the rational rotation
$r_{1/q_n}$. Moreover, if we change the rotation vector $\rho_n$ to
$\tilde\rho_n=\alpha\rho_n$, where $\alpha$ is an irrational real number, then
the orbits of $\tilde\varphi_n$ still move along the same curves, but now the
action of $\tilde\varphi_n$ on these curves is conjugate to the irrational
rotation $r_{\alpha/q_n}$ on $\kreis$ (again with conjugacy $H_{n+1}\circ
\ell_{\rho_n,t}$).

Given two points $z,z'\in\torus$ with $\pi(z)=\pi(z')$ we may choose
$x,t_z,t_{z'}\in\kreis$ such that $z=H_{n+1}\kl\ell_{\rho_n,t_z}(x)\kr$ and
$z'=H_{n+1}\kl\ell_{\rho_n,t_{z'}}(x)\kr$. For the average distance of the
iterates of these two points along their orbits, we obtain
\begin{equation} \label{e.distance_averages} \begin{split}
  \ntel\inergsum d\kl\tilde\varphi^i_n(z),\tilde\varphi^i_n(z')\kr & =
  \ \ntel\inergsum
  d\kl\tilde\varphi^i_n\kl\ell_{\rho_n,t_z}(x)\kr,\tilde\varphi^i_n
  \kl\ell_{\rho_n,t_{z'}}(x)\kr\kr \\ & = \ \ntel\inergsum F_{n,t_z,t_z'}\circ
  r_{\alpha/q_n}^i(x) \ ,
  \end{split}
\end{equation}
where
\[
F_{n,t,t'} : \kreis\to\R^+ \quad , \quad x\mapsto d\kl
H_{n+1}\circ\ell_{\rho_n,t}(x),H_{n+1}\circ\ell_{\rho_n,t'}(x)\kr \ .
\]
By unique ergodicity of the irrational rotation $r_{\alpha/q_n}$, the averages
in (\ref{e.distance_averages}) converge uniformly to $\int_{\kreis}
F_{n,t,t'}(x)\ dx$. As the family $\{F_{n,t,t'}\mid t,t'\in\kreis\}$ is compact,
the Simultaneous Uniform Ergodic Theorem~\ref{thm:simergothm} implies that this
convergence is even uniform in the parameters $t,t'\in\kreis$. This means that
for any $\kappa>0$ there exists $K\in\N$ such that
\[
\left| \frac{1}{K}\sum_{i=0}^{K-1}
d\kl\tilde\varphi^i_n(z),\tilde\varphi^i_n(z')\kr - \int_{\kreis}
F_{n,t_z,t_{z'}}(x)\ ds\right| \ < \ \kappa
\]
holds for all $z,z'\in\kreis$ with $\pi(z)=\pi(z')$. Hence, in order to ensure
the validity of (\ref{e.phin_in_Uniso}), it suffices to let $\kappa=1/2n$ and to
choose $h_{n+1}$ in such a way that
\begin{equation} \label{e.integrals_estimate}
\int_{\kreis} F_{n,t,t'}(x)\ dx \ \leq \ \frac{1}{2n}
\end{equation}
holds for all $t,t'\in\kreis$.

Now, constructing such a mapping $h_{n+1}$ is not difficult, albeit somewhat
technical. We first define $h_{n+1}$ on the vertical strip $S=I\times\kreis$,
where $I=[0,1/q_n]$. We fix $\delta>0$ and choose some circle diffeomorphism
$g$, homotopic to the identity, such that $g\kl\kreis\smin B_\delta(1/2)\kr \ssq
B_\delta(0)$. For instance, $g$ could be the projective action of a diagonal
matrix {\tiny$\begin{pmatrix} \lambda & 0 \\ 0 & 1/\lambda \end{pmatrix}$} with
sufficiently large $\lambda>0$. Then we choose a smooth homotopy
$G:[0,1]\times\kreis\to\kreis$, $(x,y)\mapsto G_x(y)$ between $G_0=\Id_{\kreis}$
and $G_1=g$ such that $G_x=\Id_{\kreis}$ for all $x$ in some neighbourhood of
zero. We assume $\delta\in[0,1/4q_n]$, let $\hat I=[\delta,1/q_n-\delta]$ and
choose a smooth mapping $T:S\to S$ such that $T(x,y)=(x,T_x(y))$ where
$T_x(y)=y+\frac{x}{1/q_n-2\delta}$ for all $(x,y)\in \hat I$ and
$T_x=\Id_{\kreis}$ for all $x$ in a neighbourhood of zero. Thus, the image of a
horizontal line segment $\hat I\times\{y\}$ under $T$ `wraps' around the torus
exactly once in the vertical direction.\smallskip

Using these auxiliary mappings, we let
\begin{equation}
  h_{n+1} : S\to S \quad , \quad (x,y)\mapsto 
  \begin{cases}
    G_{x/\delta}\circ T(x,y) & 0\leq x \leq \delta \\ g\circ T(x,y) &
    \delta<x<1/q_n-\delta \\ G_{(1-x)/\delta}\circ T(x,y) & 1/q_n-\delta \leq x
    \leq 1/q_n
  \end{cases} \ . 
\end{equation}
Then $h_{n+1}$ is smooth on $S$ and coincides with the identity on a
neighbourhood of $\partial S$.

Now, we first focus on that segment of a curve $L(\rho_n,t)$ passing through
$S$, which is parametrised by the mapping $p_n^{-1}I\to \kreis,\ x\mapsto
\ell_{\rho_n,t}(x)$. Note that there are $p_n$ such pieces, which all differ by
an additive constant that is a multiple of $1/p_n$. On $p_n^{-1}\hat I$, this
function has constant slope $\frac{1}{1/q_n-2c}+\frac{p_n}{q_n}$. As a
consequence, it passes through the set $\hat I\times B_\delta(1/2)$ at most
twice over the interval $\hat S$ and we obtain that
\begin{equation}\label{e.ell_curves_estimate_1}
	\Leb_{\kreis}\left(\left\{ x\in p_n^{-1}I\mid T_x\circ \ell_{\rho_n,t}(x)\in
	B_\delta(1/2)\right\}\right) \ \leq \ 2\delta+\frac{\delta}{p_nq_n} \ .
\end{equation}
Since $g$ maps the complement of the interval $B_\delta(1/2)$ into the interval
$B_\delta(0)$ and (\ref{e.ell_curves_estimate_1}) holds for all $t\in\kreis$,
the images of respective segments of two different curves $L(q_n,t),L(q_n,t')$
under $T$ will be $2\delta$-close to each other most of the time. More
precisely, we obtain
\begin{equation}\label{e.ell_curves_estimate_2}
	\Leb_{\kreis}\left(\left\{ x\in p_n^{-1}I\mid \left| h_{n+1,s}\circ
\ell_{\rho_n,t}(x)- h_{n+t,s}\circ \ell_{\rho_n,t'}(x)\right| \geq
2\delta\right\}\right) \ \leq \ 4\delta+\frac{2\delta}{p_nq_n} \ .
\end{equation}
So far, we have only defined $h_{n+1}$ on $S$ and only considered the
restriction of the curves $\ell_{\rho_n,t}$ to the interval $p_n^{-1}I$, which only
parametrises the $1/p_nq_n$-th part of the whole curves $L(\rho_n,t)$. However,
if we extend the definition of $h_{n+1}$ by commutativity to all of \torus, i.e.~by setting
$h_{n+1|R_{\rho_n}^k(S)}=R_{\rho_n}^k\circ h_{n+1|S}\circ R_{\rho_n}^{-k}$,
$k=1\ld q_n-1$, then the behaviour of all $p_nq_n$ segments of pairs of curves
$h_{n+1}(L(q_n,t))$ and $h_{n+1}(L(q_n,t'))$ will be the same -- we are simply
looking at a rotated version of the same situation. Therefore, we obtain the
estimate
  \begin{equation}\label{e.ell_curves_estimate_3}
\Leb_{\kreis}\left(\left\{ x\in \kreis\mid \left| h_{n+1,x}\circ
\ell_{\rho_n,t}(x)- h_{n+1,x}\circ \ell_{\rho_n,t'}(x)\right| \geq
2\delta\right\}\right) \ \leq \ 6\delta p_nq_n\ .
\end{equation}
  Since $H_n$ is uniformly continuous, we may choose $\delta$ in such a way that
  $d(x,y)<2\delta$ implies $d(H_n(x),H_n(y))<\frac{1}{4n}$.
  Then~(\ref{e.ell_curves_estimate_3}) implies
    \begin{equation}\label{e.ell_curves_estimate_4}
\Leb_{\kreis}\left(\left\{ s\in \kreis\mid \left| H_{n+1,s}\circ
\ell_{\rho_n,t}(s)- H_{n+t,s}\circ \ell_{\rho_n,t'}(s)\right| \geq
1/4n\right\}\right) \ \leq \ 6\delta p_nq_n \ .
    \end{equation}
 When $\delta$ is sufficiently small (say $\delta<1/24np_nq_n$), this finally
 yields~(\ref{e.integrals_estimate}).

 In order to complete the induction step, we now choose
 $\tilde\rho_n=\alpha\rho_n$, where $\alpha\in\R\smin\Q$ is sufficiently close
 to $1$ such that (\ref{e.mainconstr1}) holds. After that, we can take
 $\hat\rho_n$ to be any totally irrational rotation vector, close enough to
 $\tilde\rho_n$ to ensure~(\ref{e.mainconstr2}), and finally choose a new
 rational rotation vector $\rho_{n+1}$ that is close enough to $\hat\rho_n$ to
 ensure~(\ref{e.mainconstr3}) and satisfies (\ref{e.relatively_prime}) holds.
 This completes the inductive construction and therefore the proof of
 Theorem~\ref{t.meanequi_with_full_fibres}.

 \begin{rem}\label{r.genericity}
   \alphlist
   \item
     The above construction starts with an arbitrary rational rotation
     $R_{\rho_1}$ and allows to ensure that the resulting limit diffeomorphism
     $\varphi$ is arbitrarily close to $R_{\rho_1}$. Together with the
     $G_\delta$-property of mean equicontinuity and strict ergodicity, this
     implies that the set of skew product diffeomorphisms which satisfy the
     assertions of Theorem~\ref{t.meanequi_with_full_fibres} form a residual
     subset of the space
     \[
     \overline{\mathrm{Cob}}(\torus,\pi) \ = \ \left\{ H\circ R_{\rho}\circ
     H^{-1} \mid \rho\in\torus,\ H \in \diffeo^k(\torus),\ \pi\circ
     H=\pi\right\} \ , 
     \]
     where $k\in\N_0\cup \{\infty\}$ is arbitrary. The analogous observation has
     been made in
     \cite{DownarowiczGlasner2015IsomorphicExtensionsAndMeanEquicontinuity}.
     \item All the torus maps in the above construction, and hence also the
       resulting diffeomorphisms $\varphi$, may be chosen as the projective
       actions of quasiperiodic $\sltr$-cocycles (compare
       \cite{haro/puig:2006}).  \listend
 \end{rem}

 \section{Total strict ergodicity for lifts and non-existence of additional eigenvalues} \label{NewExamples}

Given any $l,m\in\N$, $\T^{(l,m)}=\R/l\Z \times \R/m\Z$ is a canonical finite
covering space of the torus $\torus$. For any torus homeomorphism $\psi$
homotopic to the identity, there exist lifts $L_{(l,m;s)}^\psi
:\T^{(l,m)}\to\T^{(l,m)}$ with $s\in A(l,m)=(\Z/l\Z)\times(\Z/m\Z)$, which are
uniquely determined by the requirement that $L_{(l,m;s)}^\psi(0)\in
[s_1,s_1+1)\times[s_2,s_2+1)$. Note that two different lifts $L^\psi_{(l,m;s)}$
    and $L^\psi_{(l,m;s)}$ are conjugate by the integer translation
    $(x,y)\mapsto (x+(s_1'-s_1),y+(s_2'-s_s))$ on $\T^{(l,m)}$ and thus share
    the same dynamical properties.

      We denote the iterates of these lifts by
      $L_{(l,m;s)}^{\psi,j}=\left(L_{(l,m;s)}^\psi\right)^j$. Note that
      $L_{(l,m;s)}^{\psi,j}$ may differ from $L_{(l,m;s)}^{\psi^j}$ by an
      integer translation.  For any rotation $R_\rho$ on \torus, its lift
      $L_{(l,m;s)}^{R_\rho}$ is conjugate to the torus rotation
      $R_{((\rho_1+s_1)/l,(\rho_2+s_2)/m)}$, where a conjugacy $h_{(l,m)}$ is
      simply given by rescaling, that is, $h_{(l,m)}(x,y)=(x/l,y/m)$. Obviously,
      this does not affect the arithmetical properties of the rotation vector
      (rational, irrational or totally irrational). We can also rescale the
      lifts $L^\psi_{(l,m;s)}$ to obtain homeomorphisms
      $\ell^\psi_{(l,m;s)}=h_{(l,m)}\circ L^\psi_{(l,m;s)}\circ h^{-1}_{(l,m)}$
      of the standard torus \torus.  Further, given torus homeomorphisms
      $\varphi,\psi$ homotopic to the identity, we have
      \begin{equation}
      d_k(\varphi,\psi) \ = \ \min_{s\in
        A(l,m)}d_k\left(L^\varphi_{(l,m;0)},L^\psi_{(l,m;s)}\right) \ \geq
      \ \min_{s\in A(l,m)}d_k\left(\ell^\varphi_{(l,m;0)},\ell^\psi_{(l,m;s)}\right)
      \end{equation}

      \subsection{Total strict ergodicity of the lifts}
      
      In order to ensure that all iterates of all lifts of the diffeomorphism
      $\varphi$ from Theorem~\ref{t.meanequi_with_full_fibres} are strictly
      ergodic as well, we may now modify the construction in
      Section~\ref{MainConstruction} by replacing conditions
      (\ref{e.mainconstr1})--(\ref{e.mainconstr3}) with the following stronger
      assumptions: we recursively choose sequences $s^{(l,m)}_n,\tilde
      s^{(l,m)}_n,\hat s^{(l,m)}_n$ such that
      \begin{eqnarray*}
        d_n\left(L^{\varphi_n}_{\kl l,m,s^{(l,m)}_n\kr},L^{\tilde\varphi_n}_{\kl
          l,m,\tilde s^{(l,m)}_n\kr}\right) & =
        &d_n\kl\varphi_n,\tilde\varphi_n\kr \ , \\ d_n\kl
        L^{\tilde\varphi_n}_{\kl l,m,\tilde
          s^{(l,m)}_n\kr},L^{\hat\varphi_n}_{\kl l,m,\hat s^{(l,m)}_n\kr}\kr & =
        & d_n(\tilde\varphi_n,\hat\varphi_n) \ , \\ d_n\kl
        L^{\hat\varphi_n}_{\kl l,m,\hat s^{(l,m)}_n\kr},L^{\varphi_{n+1}}_{\kl
          l,m,s^{(l,m)}_{n+1}\kr}\kr & = & d_n(\hat \varphi_n,\varphi_{n+1})
      \end{eqnarray*}
hold for all $n\in\N$. Then, in the $n$-th step of the induction, we exert
control over the speed of convergence not only for the original maps
$\varphi_n,\hat\varphi_n,\tilde\varphi_n$, but also for all iterates of all
lifts up to level $n$. To that end, we require that for all $l,m,j=1\ld n$ we
have
       \begin{eqnarray}
  \lefteqn{ d_n\kl
    \ell^{\varphi_n,j}_{\kl l,m;s^{(l,m)}_n\kr},\ell^{\tilde\varphi_{n},j}_{\kl l,m;\tilde
      s^{(l,m)}_n\kr}\kr} \nonumber \\ & \leq &
  \drittel\label{e.modified_mainconstr1}
  \min\left\{\eta^\mathrm{se}_{n-1}\kl\ell^{\hat\varphi_1,j}_{\kl l,m;\hat
    s^{(l,m)}_1\kr}\ld \ell^{\hat\varphi_{n-1},j}_{\kl l,m;\hat
    s^{(l,m)}_{n-1}\kr}\kr,\eta^\mathrm{me}_{n-1}\kl\ell^{\tilde\varphi_1,j}_{\kl l,m;\tilde
    s^{(l,m)}_1\kr}\ld \ell^{\tilde\varphi_{n-1},j}_{\kl l,m;\tilde
    s^{(l,m)}_{n-1}\kr}\kr\right\}  \\ \nonumber
  \lefteqn{d_n\kl\ell^{\tilde\varphi_n,j}_{\kl l,m;\tilde
      s^{(l,m)}_n\kr},\ell^{\hat\varphi_{n},j}_{\kl l,m;\hat s^{(l,m)}_n\kr}\kr}
  \\ \label{e.modified_mainconstr2} & \leq & \drittel
  \min\left\{\eta^\mathrm{se}_{n-1}\kl\ell^{\hat\varphi_1,j}_{\kl l,m;\hat
    s^{(l,m)}_1\kr}\ld \ell^{\hat\varphi_{n-1},l}_{\kl l,m;\hat
    s^{(l,m)}_{n-1}\kr}\kr,\eta^\mathrm{me}_{n}
  \kl\ell^{\tilde\varphi_1,l}_{\kl l,m;\tilde s^{(l,m)}_1\kr}\ld
  \ell^{\tilde\varphi_{n},j}_{\kl l,m;\tilde s^{(l,m)}_{n}\kr}\kr\right\} \quad\quad
  \ \\ \nonumber \lefteqn{ d_n\kl\ell^{\hat\varphi_n,j}_{\kl l,m;\hat
      s^{(l,m)}_n\kr},\ell^{\varphi_{n+1},j}_{\kl l,m;s^{(l,m)}_{n+1}\kr}\kr}
  \\ \label{e.modified_mainconstr3} & \leq & \drittel
  \min\left\{\eta^\mathrm{se}_{n}\kl\ell^{\hat\varphi_1,j}_{\kl l,m;\hat
    s^{(l,m)}_1\kr}\ld \ell^{\hat\varphi_{n},j}_{\kl l,m;\hat
    s^{(l,m)}_n\kr}\kr,\eta^\mathrm{me}_{n}\kl\ell^{\tilde\varphi_1,j}_{\kl l,m;\tilde
    s^{(l,m)}_1\kr}\ld \ell^{\tilde\varphi_{n},j}_{\kl l,m;\tilde
    s^{(l,m)}_n\kr}\kr\right\}
       \end{eqnarray}

    With the same reasoning as in Section~\ref{MainConstruction}, we now obtain
    that for any $(l,m)\in\N^2$ and $j\in \N$ the sequence
    $\ell^{\hat\varphi_n,j}_{\left(l,m,\hat s^{(l,m)}_n\right)}$ converges to a
    strictly ergodic diffeomorphism, which is a rescaled lift
    $\ell^{\varphi,j}_{(l,m,s)}$ of an iterate of $\varphi=\nLim \hat\varphi_n$.

    Let $\psi=\ell_{(1,m;0)}^\varphi$. The invariant measure of $\varphi$ is of
    the form $\mu=(\Id_{\kreis}\times\gamma)_*\Leb_{\kreis}$, where
    $\gamma:\kreis\to\kreis$ is the measurable function whose graph supports
    $\mu$. Consequently, if we let
    \begin{equation}
      \gamma_j:\kreis\to\kreis \quad ,\quad  x\mapsto
    \frac{\gamma(x)+j-1}{m} \ ,\quad j=1\ld m, 
    \end{equation}
    then
    \begin{equation}
      \label{e.mupsi}
      \mu^\psi=\mtel\jmsum (\Id_{\kreis}\times\gamma_j)_*\Leb_{\kreis}
    \end{equation}
    defines an invariant measure for the rescaled lift $\psi$. By unique
    ergodicity, it is the only $\psi$-invariant measure. This proves assertions
    (a)--(c) of Theorem~\ref{t.m:1-topomorphic_extensions}.

    \subsection{Non-existence of additional eigenvalues}

    We consider a torus diffeomorphism $\varphi$ that satisfies the assertions
    (a)--(c) of Theorem~\ref{t.m:1-topomorphic_extensions}. Fix $m\in\N$ and let
    $\psi=\ell^\varphi_{(1,m;0)}$ as above. Recall that both mappings are skew
    products over the irrational rotation $r_\alpha:x\mapsto x+\alpha$. Our aim
    is to show that $\psi$ has the same discrete dynamical spectrum as
    $\varphi$, that is, there exist no additional dynamical eigenvalues for
    $\psi$.

    Suppose that $\gamma:\kreis\to\kreis$ is the measurable function whose graph
    supports the unique $\varphi$-invariant measure $\mu$, that is,
    $\mu=(\Id_{\kreis}\times\gamma)_*\Leb_{\kreis}$. Then, as discussed in the
    previous section, the unique $\psi$-invariant measure $\mu^\psi$ is given by
    (\ref{e.mupsi}).  Now, suppose for a contradiction that $f\in
    L^2_{\mu^\psi}(\torus)$ is an eigenfunction of $U_\psi$ with a new
    eigenvalue $\lambda$ that is not contained in the group of eigenvalues
    $M(\alpha)=\{\exp(2\pi ik\alpha)\mid k\in\Z\}$ of $U_\varphi$. Then $f$
    cannot be constant in the fibres (that is, independent of the second
    coordinate $y$), since in this case $x\mapsto f(x,0)$ would define an
    eigenfunction of $r_\alpha$ with eigenvalue $\lambda$, contradicting the
    fact that the eigenvalue group of $r_\alpha$ is $M(\alpha)$ as
    well. Further, the function
    \[
    g:\kreis\to\kreis \quad , \quad x \mapsto \prod_{j=1}^m f(x,\gamma_j(x))
    \]
    is an eigenfunction of $r_\alpha$ with eigenvalue $\lambda^m$, since we have
    \[
    \psi(\{(x,\gamma_1(x))\ld (x,\gamma_m(x))\}) \ =
    \ \{(x+\alpha,\gamma_1(x+\alpha))\ld (x+\alpha,\gamma_m(x+\alpha))\}
    \]
    and therefore 
    \[
      g(x+\alpha) \ = \ \prod_{j=1}^m f\circ\psi(x,\gamma_j(x)) \ =
      \ \lambda^m\prod_{j=1}^m f(x,\gamma_j(x)) \ = \ \lambda^m g(x) \ 
      \]
    $\Leb_{\kreis}$-almost surely on \kreis. Hence, $g$ is an eigenfunction of
      $r_\alpha$.  This implies $\lambda^m=\exp(2\pi ik\alpha)$ for some
      $k\in\Z$, so that $\lambda$ must be of the form
    \[
    \lambda \ = \ \exp\left(2\pi i\left(\frac{k\alpha+p}{m}\right)\right)
    \]
    for some $(k,p)\in(\Z\times\{0\ld m-1\})\smin (m\Z\times\{0\})$.

    We first assume that $k=0$. In this case, $f$ is an eigenfunction of
    $\psi^m$ with eigenvalue $\lambda^m=1$. As $f$ is non-constant, this
    contradicts the ergodicity of $\psi^m$.

    Secondly, assume that $p=0$. In this case, we consider the rescaled lift
    $\tilde\psi=\ell^\psi_{(1,m,0)}$ of $\psi$, which is now a skew product over
    the rotation $r_{\alpha/m}$. The eigenfunction $f$ transforms to an
    eigenfunction
    \[
    \tilde f(x,y) \ = \ f(mx,y)
    \]
    of $U_{\tilde\psi}$, which still has the same eigenvalue $\lambda=\exp(2\pi
    ik\alpha/m)$. However, this is now an eigenvalue of the underlying rotation
    $r_{\alpha/m}$, which corresponds to the eigenfunction $g(x,y)=\exp(2\pi
    ikx/m)$ of $U_{\tilde\psi}$. As $g$ is constant in $y$ for $\nu$-almost
    every $x$, but $\tilde f$ is not, the two eigenfunctions cannot
    coincide. Since they have the same eigenvalue, this contradicts the unique
    ergodicity of $\tilde\psi$ (note that $\tilde\psi$ is still a lift of the
    original map $\varphi$ and is therefore uniquely ergodic).\medskip

    Finally, we consider the case where $k\neq 0\neq p$. In this case, $f$ is an
    eigenfunction of $\psi^m$, with eigenvalue $\exp(2\pi ik\alpha)$. However,
    $\psi^m$ is a rescaled lift of $\varphi^m$, which has the same properties as
    $\varphi$ (it satisfies the assertions of
    Theorem~\ref{t.meanequi_with_full_fibres}), but has underlying rotation
    number $m\alpha$. This means that we are in exactly the same situation as in
    the case $p=0$ above, and again arrive at a contradiction.\medskip

    Altogether, this shows that $\psi$ has exactly the same dynamical
    eigenvalues as $\varphi$. However, the two systems cannot be isomorphic, as
    measure-theoretic factor maps into group rotations are uniquely determined up to post-composition
    with a rotation and the canonical factor map from $\psi$ to $\varphi$ is
    $m$:$1$. Due to the Halmos-von Neumann Theorem, $\psi$ and $\varphi$ cannot
    have the same (purely discrete) dynamical spectrum. This means that the
    spectrum of $U_\psi$ must have a continuous component.

    \subsection{Singularity of the continuous spectral component}
    \label{SingularSpectrum}

In order to complete the proof of Theorem~\ref{t.m:1-topomorphic_extensions},
our aim now is to show that the Anosov-Katok construction of $\varphi$ can be
modified such that the map $\psi$ defined in the last section has a singular
continuous spectral component. To that end, we need to show that $\varphi$ and
all its lifts admit cyclic approximation by periodic transformations with speed
$o(1/n)$, in the sense of Theorem~\ref{t.periodic_approximations}. The main
problem here lies in the fact that -- unlike in Anosov-Katok construction in an
area-preserving setting -- the unique invariant measure $\mu$ of the
transformation $\varphi$ is not known {\em a priori}. Therefore, it is necessary
to control both the size of the symmetric differences between the images of
partition elements under $\varphi_n$ and the eventual limit $\varphi$ and also
the limit measure of these sets at the same time. Recall that, in the end, we
need to show that there exist suitable partitions $\cP_n$ that satisfy
conditions (P1)--(P3) from Section~\ref{DynamicalSpectrum}. This will exclude
the existence of an absolutely continuous spectral component and thus complete
the proof. 
\medskip

We adopt the notation from the main construction in
Section~\ref{MainConstruction}. In particular, $q_n$ is the denominator of the
rotation vector $\rho_n$ of the $n$-th approximating diffeomorphism
$\varphi_n$. Note that $\rho_n$ was chosen only after the $n$-th conjugating
diffeomorphism $H_n$ was defined. Hence, we can require that
\begin{equation} \label{e.partition_diameter}
  d(x,y)<2/q_n \ \follows \ d(H_n(x),H_n(y))<1/n \ . 
\end{equation}
We define the partition $\cP_n$ as $\cP_n=\{P_{n,i,j}\mid i,j=0\ld q_n-1\}$,
where
\[
P_{n,i,j}=H_n\left([i/q_n,(i+1)/q_n)\times [j/q_n,(j+1)/q_n)\right) \ .
\]
Note that $\varphi_n=H_n\circ R_{\rho_n}\circ H_n^{-1}$ cyclically permutes the
elements of $\cP_n$ due to the fact that $\rho_n=(p_n/q_n,p_n'/q_n)$ with
$p_n,p_n',q_n$ relatively prime (\ref{e.relatively_prime}). Moreover, due to
(\ref{e.partition_diameter}), the maximal diameter of an element of $\cP_n$ is
at most $1/n$, which implies (P2) due to the regularity of the measure
$\mu$. Hence, both (P1) and (P2) are satisfied.\smallskip

It remains to show (P3) with sufficiently fast speed of convergence. We choose
an arbitrary function $s:\N\to\R^+$ which satisfies $\nLim ns(n)=0$, so that
Theorem~\ref{t.periodic_approximations} will be applicable. As
$K_n=\sharp\cP_n=q_n^2$, we have to ensure that
\begin{equation}
  \label{e.partition_approximation_speed}
  \sum_{i,j=0}^{q_n-1} \mu\left( \varphi_n\kl P_{n,i,j}\kr \triangle \varphi\kl
  P_{n,i,j}\kr\right) \ \leq \ s(q_n^2) \ .
\end{equation}
In order to do so, we need to introduce further inductive assumptions into the
construction carried out in Section~\ref{MainConstruction} that we already
modified by (\ref{e.modified_mainconstr1})--(\ref{e.modified_mainconstr3})
above.

Suppose that $n\in\N$ and $H_{n+1}$ has already been chosen, but not the
rotation vectors $\tilde\rho_n,\ \hat\rho_n$ and $\rho_{n+1}$ (which then define
$\tilde\varphi_n,\ \hat\varphi_n$ and $\varphi_{n+1}$). Let $\mu_n=\kl
H_{n+1}\kr_*\Leb_{\torus}$ and note that independent of the choice $\hat\rho_n$
(assuming total irrationality), this is the unique invariant measure of
$\hat\varphi_n=H_{n+1}\circ R_{\rho_n}\circ H^{-1}_{n+1}$. For $i,j=1\ld q_n$,
we choose continuous functions $f_{n,i,j}:\torus\to[0,1]$ such that
\[
\partial \left(\varphi_n\left(P_{n,i,j}\right)\right) \ \ssq
\ \inte\left(f_{n,i,j}^{-1}(1)\right) \ 
\]
and
\[
  \int_{\torus} f_{n,i,j} d\mu_{n} \ < \ s(q_n^2)/q_n^2 \ .
  \]
  For the latter condition, note that since $\varphi_n$ simply permutes the
  elements of $\cP_n$, the set $\partial
  \left(\varphi_n\left(P_{n,i,j}\right)\right)$ is simply the boundary of
  another partition element, and therefore a smooth curve that has measure zero
  with respect to $\mu_n$ (which has smooth density with respect to Lebesgue,
  since it is the image of the Lebesgue measure under the smooth diffeomorphism
  $H_n$).

   If $\tilde\rho_n$ and subsequently $\hat\rho_n$ are chosen sufficiently close
   to $\rho_n$, so that $\tilde\varphi_n$ and $\hat\varphi_n$ are close to
   $\varphi_n$, then we have
   \begin{equation}\label{e.singular_construction_1}
\hat\varphi_n\left(P_{n,i,j}\right) \Delta \varphi_n\left(P_{n,i,j}\right)
\ \ssq \ \inte\left(f_{n,i,j}^{-1}(1)\right)
   \end{equation}
   By unique ergodicity, there exists $M_n\in\N$ such that
  \begin{equation} \label{e.singular_construction_2}
  \sup_{x\in\torus} \frac{1}{M_n} \sum_{l=1}^{M_n} f_{n,i,j}\circ
  \hat\varphi_n^l(x) \ < \ s(q_n^2)/q_n^2 \ .
  \end{equation}
 Since both (\ref{e.singular_construction_1}) and
 (\ref{e.singular_construction_2}) are open conditions, we may now choose
 $\delta_n>0$ such that, for all $\psi\in
 \overline{B_{\delta_n}(\hat\varphi_n)}$, the following conditions hold.
 \begin{eqnarray}\label{e.singular_construction_3}
   \psi\left(P_{n,i,j}\right) \Delta \varphi_n\left(P_{n,i,j}\right) & \ssq &
   \inte\left(f_{n,i,j}^{-1}(1)\right) \\ \sup_{x\in\torus} \frac{1}{M_n}
   \sum_{l=1}^{M_n} f_{n,i,j}\circ \psi^l(x) & < & s(q_n^2)/q_n^2
   \ .\label{e.singular_construction_4}
 \end{eqnarray}
 
 We can now require, throughout the inductive construction in
 Section~\ref{MainConstruction}, that for all $n\in\N$ we have
 \begin{equation}
   d_0\kl\hat\varphi_n,\hat\varphi_m\kr \ \leq \ \delta_m \quad\textrm{ for all }
   m=1\ld n-1 \ . 
 \end{equation}
 For this, when going from $n$ to $n+1$, it suffices to ensure that
 \[
\max\left\{ d_0\kl\hat\varphi_n,\varphi_{n+1}\kr,\ d_0\kl
\varphi_{n+1},\tilde\varphi_{n+1}\kr,\ d_0\kl\tilde
\varphi_{n+1},\hat\varphi_{n+1}\kr\right\} \ < \ \drittel \min_{m=1}^n \delta_m-d_0\kl
\hat\varphi_n,\hat\varphi_m\kr \ .
 \]
 This, in turn, is simply achieved by a sufficiently small variation of the
 rotation vectors when choosing $\rho_{n+1},\tilde\rho_{n+1}$ and
 $\hat\rho_{n+1}$. In particular, it does not contradict any other recursive
 assumptions that we have made elsewhere during the construction.

 As a consequence, the resulting limit $\varphi$ will still
 satisfy~(\ref{e.singular_construction_3}) and (\ref{e.singular_construction_4})
 (with $\psi$ replaced by $\varphi$). However, if $\mu$ denotes the unique
 $\varphi$-invariant measure, then the above conditions imply that, for all
 $n\in\N$,
 \[
 \mu\left(\varphi\left(P_{n,i,j}\right) \Delta
 \varphi_n\left(P_{n,i,j}\right)\right)
 \ \stackrel{(\ref{e.singular_construction_3})}{\leq} \ \int_{\torus} f_{n,i,j}
 d\mu \ \stackrel{(\ref{e.singular_construction_4})}{\leq} \ s(q_n^2)/q_n^2 \ .
 \]
 This proves (\ref{e.partition_approximation_speed}), so that
 Theorem~\ref{t.periodic_approximations} yields the absence of singular
 continuous spectrum for $U_\varphi$. Hence, assertion (d) of
 Theorem~\ref{t.m:1-topomorphic_extensions} holds, which completes the proof.


\begin{thebibliography}{GRJY21}

\bibitem[AK70]{anosov/katok:1970}
D.~Anosov and A.~Katok.
\newblock New examples in smooth ergodic theory. ergodic diffeomorphisms.
\newblock {\em Trans. Mosc. Math. Soc.}, 23:1--35, 1970.

\bibitem[Aus88]{auslander1988minimal}
J.~Auslander.
\newblock {\em Minimal flows and their extensions}.
\newblock Elsevier Science Ltd, 1988.

\bibitem[BLM07]{BaakeLenzMoody2007Characterization}
M.~Baake, D.~Lenz, and R.V. Moody.
\newblock Characterization of model sets by dynamical systems.
\newblock {\em Ergodic Theory Dynam. Systems}, 27(2):341--382, 2007.

\bibitem[BS02]{BrinStuck2002DynamicalSystems}
Michael Brin and Garrett Stuck.
\newblock {\em Introduction to dynamical systems}.
\newblock Cambridge University Press, Cambridge, 2002.

\bibitem[DG16]{DownarowiczGlasner2015IsomorphicExtensionsAndMeanEquicontinuity}
T.\ Downarowicz and E.~Glasner.
\newblock Isomorphic extensions and applications.
\newblock {\em Topol. Methods Nonlinear Anal.}, 48(1):321--338, 2016.

\bibitem[Dow05]{Downarowicz2005ToeplitzFlows}
T.~Downarowicz.
\newblock Survey of odometers and {T}oeplitz flows.
\newblock {\em Contemp.\ Math.}, 385:7--38, 2005.

\bibitem[EW10]{Einsiedler2010-ln}
Manfred Einsiedler and Thomas Ward.
\newblock {\em Ergodic Theory}.
\newblock Graduate Texts in Mathematics. Springer, Guildford, England, 2011
  edition, September 2010.

\bibitem[FGJO21]{FuhrmannGlasnerJaegerOertel2021TameImpliesRegular}
G.~Fuhrmann, E.~Glasner, T.~J\"{a}ger, and C.~Oertel.
\newblock Irregular model sets and tame dynamics.
\newblock {\em Trans. Amer. Math. Soc.}, 374(5):3703--3734, 2021.

\bibitem[FGL22]{FuhrmannGroegerLenz2022MeanEquicontinuousGroupActions}
Gabriel Fuhrmann, Maik Gr\"{o}ger, and Daniel Lenz.
\newblock The structure of mean equicontinuous group actions.
\newblock {\em Israel J. Math.}, 247(1):75--123, 2022.

\bibitem[FK04]{fayad2004constructions}
Bassam Fayad and Anatole Katok.
\newblock Constructions in elliptic dynamics.
\newblock {\em Ergodic Theory and Dynamical Systems}, 24(5):1477--1520, 2004.

\bibitem[GR17]{GarciaRamos2017WeakEquicontinuity}
Felipe Garc\'{\i}a-Ramos.
\newblock Weak forms of topological and measure-theoretical equicontinuity:
  relationships with discrete spectrum and sequence entropy.
\newblock {\em Ergodic Theory Dynam. Systems}, 37(4):1211--1237, 2017.

\bibitem[GRJY21]{GarciaJaegerYe2021DiamMeanEquicontinuity}
Felipe Garc\'{\i}a-Ramos, Tobias J\"{a}ger, and Xiangdong Ye.
\newblock Mean equicontinuity, almost automorphy and regularity.
\newblock {\em Israel J. Math.}, 243(1):155--183, 2021.

\bibitem[GRLZ19]{GarciaRamosLiZhang2019MeanSensitivity}
Felipe Garc\'{\i}a-Ramos, Jie Li, and Ruifeng Zhang.
\newblock When is a dynamical system mean sensitive?
\newblock {\em Ergodic Theory Dynam. Systems}, 39(6):1608--1636, 2019.

\bibitem[GRM15]{Garcia-RamosMarcus2015MeanSensitivity}
F.\ Garc{\'\i}a-Ramos and B.~Marcus.
\newblock Mean sensitive, mean equicontinuous and almost periodic functions for
  dynamical systems.
\newblock {\em Preprint {\tt arXiv:1509.05246}}, 2015.

\bibitem[HLY11]{HuangLuYe2011MeasureTheoreticSensitivity}
Wen Huang, Ping Lu, and Xiangdong Ye.
\newblock Measure-theoretical sensitivity and equicontinuity.
\newblock {\em Israel J. Math.}, 183:233--283, 2011.

\bibitem[HP06]{haro/puig:2006}
A.~Haro and J.~Puig.
\newblock Strange non-chaotic attractors in {H}arper maps.
\newblock {\em Chaos}, 16, 2006.

\bibitem[HvN42]{HalmosVonNeumann1942OperatorMethodsII}
Paul~R. Halmos and John von Neumann.
\newblock Operator methods in classical mechanics. {II}.
\newblock {\em Ann. of Math. (2)}, 43:332--350, 1942.

\bibitem[IL94]{IwanikLacroix1994NonRegularToeplitz}
A.~Iwanik and Y.~Lacroix.
\newblock Some constructions of strictly ergodic non-regular {T}oeplitz flows.
\newblock {\em Studia Math.}, 110(2):191--203, 1994.

\bibitem[JK69]{JacobsKeane1969ToeplitzSequences}
K.~Jacobs and M.~Keane.
\newblock 0-1-sequences of {T}oeplitz type.
\newblock {\em Z. Wahrsch. verw. Gebiete}, 13(2):123--131, 1969.

\bibitem[Kea68]{Keane1968GeneralisedMorse}
M.~Keane.
\newblock Generalized {M}orse sequences.
\newblock {\em Z. Wahrscheinlichkeitstheorie und Verw. Gebiete}, 10:335--353,
  1968.

\bibitem[KH97]{katok/hasselblatt:1997}
A.~Katok and B.~Hasselblatt.
\newblock {\em Introduction to the Modern Theory of Dynamical Systems}.
\newblock Cambridge University Press, 1997.

\bibitem[KS67]{KatokStepin1967PeriodicApproximations}
A.~B. Katok and A.~M. Stepin.
\newblock Approximations in ergodic theory.
\newblock {\em Uspehi Mat. Nauk}, 22(5(137)):81--106, 1967.

\bibitem[LTY15]{LiTuYe2015MeanSensitivity}
Jian Li, Siming Tu, and Xiangdong Ye.
\newblock Mean equicontinuity and mean sensitivity.
\newblock {\em Ergodic Theory Dynam. Systems}, 35(8):2587--2612, 2015.

\bibitem[LYY21]{LiYeTu2021MeanEquicontinuityComplexity}
Jie Li, Xiangdong Ye, and Tao Yu.
\newblock Mean equicontinuity, complexity and applications.
\newblock {\em Discrete Contin. Dyn. Syst.}, 41(1):359--393, 2021.

\bibitem[Mey72]{Meyer1972AlgebraicNumbers}
Yves Meyer.
\newblock {\em Algebraic numbers and harmonic analysis.}
\newblock North-Holland Publishing Co., Amsterdam-London; American Elsevier
  Publishing Co., Inc., New York,,, 1972.

\bibitem[Moo00]{Moody2000ModelSetsSurvey}
R.V. Moody.
\newblock Model sets: A survey.
\newblock In {\em From quasicrystals to more complex systems}, pages 145--166.
  Springer, 2000.

\bibitem[MP79]{MarkleyPaul1979PositiveEntropyToeplitzFlows}
N.G. Markley and M.E. Paul.
\newblock Almost automorphic symbolic minimal sets without unique ergodicity.
\newblock {\em Israel J. Math.}, 34(3):259--272, 1979.

\bibitem[Sch00]{Schlottmann1999GeneralizedModelSets}
M.~Schlottmann.
\newblock Generalized model sets and dynamical systems.
\newblock In {\em Directions in mathematical quasicrystals}, volume~13 of {\em
  CRM Monogr. Ser.}, pages 143--159. Amer. Math. Soc., Providence, RI, 2000.

\bibitem[vN32]{VonNeumann1932Operatorenmethode}
J.~von Neumann.
\newblock Zur {O}peratorenmethode in der klassischen {M}echanik.
\newblock {\em Ann. of Math. (2)}, 33(3):587--642, 1932.

\bibitem[Wal82]{Walters1982ErgodicTheory}
P.~Walters.
\newblock {\em An introduction to ergodic theory}, volume~79 of {\em Graduate
  Texts in Mathematics}.
\newblock Springer-Verlag, New York-Berlin, 1982.

\bibitem[Wil84]{Williams1984ToeplitzFlows}
S.~Williams.
\newblock Toeplitz minimal flows which are not uniquely ergodic.
\newblock {\em Z. Wahrsch. verw. Gebiete}, 67(1):95--107, 1984.

\end{thebibliography}
\end{document}